\documentclass[letterpaper,10pt,conference]{IEEEconf}
\IEEEoverridecommandlockouts                              
\overrideIEEEmargins

\usepackage{amsmath,amssymb,amsfonts}
\allowdisplaybreaks
\usepackage{bm}
\usepackage{algorithm}
\usepackage{algpseudocode}
\usepackage{graphicx}
\usepackage{subcaption}
\usepackage{xcolor}
\makeatletter
\let\NAT@parse\undefined
\makeatother
\usepackage[hidelinks]{hyperref}

\usepackage{ntheorem} 
\newtheorem{theoremii}{Theorem}[section]

\newtheorem{assumptionii}{Assumption}[section]

\newtheorem{lemmaiii}{Lemma}[theoremii]

\newtheorem{remark}{Remark}
\newtheorem{definitionii}{Definition}[section]

\def\BibTeX{{\rm B\kern-.05em{\sc i\kern-.025em b}\kern-.08em
    T\kern-.1667em\lower.7ex\hbox{E}\kern-.125emX}}

\begin{document}

\title{\LARGE \bf Second-order Properties of Noisy Distributed Gradient Descent\\
}

\author{Lei Qin, Michael Cantoni, and Ye Pu
\thanks{This work was supported by a Melbourne Research Scholarship and the Australian Research Council (DE220101527 and DP210103272).}
\thanks{L. Qin, M. Cantoni, and Y. Pu are with the Department of Electrical and Electronic Engineering, University of Melbourne, Parkville VIC 3010, Australia \texttt{\small leqin@student.unimelb.edu.au, \{cantoni, ye.pu\}@unimelb.edu.au}.
}}

\maketitle

\begin{abstract}
We study a fixed step-size noisy distributed gradient descent algorithm for solving optimization problems in which the objective is a finite sum of smooth but possibly non-convex functions. Random perturbations are introduced to the gradient descent directions at each step to actively evade saddle points. Under certain regularity conditions, and with a suitable step-size, it is established that each agent converges to a neighborhood of a local minimizer and the size of the neighborhood depends on the step-size and the confidence parameter. A numerical example is presented to illustrate the effectiveness of the random perturbations in terms of escaping saddle points in fewer iterations than without the perturbations.


\end{abstract}

\begin{keywords}
Non-convex optimization; first-order methods; random perturbations; evading saddle points
\end{keywords}

\section{Introduction}
\label{sec: Introduction}
We consider the optimization problem
\begin{align}
    \label{eq: Unconstrained optimization problem}
    &\min_{\mathbf{x} \in \mathbb{R}^{n}} f(\mathbf{x}) \triangleq \min_{\mathbf{x} \in \mathbb{R}^{n}} \sum_{i=1}^{m} f_{i}(\mathbf{x}),
\end{align}
where each $f_{i}: \mathbb{R}^{n} \rightarrow \mathbb{R}$ is \textit{smooth} but possibly \emph{non-convex}, and $\mathbf{x} \in \mathbb{R}^{n}$ is the decision vector. The aim is to employ $m$ agents to iteratively solve the optimization problem in \eqref{eq: Unconstrained optimization problem}, over an undirected and connected network graph $\mathcal{G}(\mathcal{V},\mathcal{E})$. Each agent $i \in \mathcal{V}:=\{1,\ldots,m\}$ only knows the corresponding function $f_{i}$ and its gradient. The pair of agents $(i,j) \in \mathcal{V} \times \mathcal{V}$ is able to directly exchange information if and only if $(i,j)\in\mathcal{E}$. Collaborative distributed optimization over a network is of significant interest in the contexts of control, learning and estimation, particularly in large-scale system scenarios, such as unmanned vehicle systems \cite{dong2018theory}, electric power systems \cite{qiu2009literature}, transit systems \cite{gao2019reinforcement}, and wireless sensor networks \cite{nedic2018distributed}.

In the field of optimization, two primary classes of distributed methods can be identified: dual decomposition methods and consensus-based methods. Dual decomposition methods involve minimizing an augmented Lagrangian formulated on the basis of constraints that enforce agreement between agents, via iterative updates of the corresponding primal and dual variables \cite{boyd2011distributed}. The distributed dual decomposition algorithm in \cite{terelius2011decentralized} involves agents alternating between updating their primal and dual variables and communicating with their neighbors. In \cite{shi2014linear}, it is established that distributed alternating direction method of multipliers (ADMM) exhibits linear convergence rates in \emph{strongly convex} settings. Consensus-based methods can be traced back to the distributed computation models proposed in \cite{tsitsiklis1986distributed}, which seek to eliminate agent disagreements through local iterate exchange and weighted averaging to achieve consensus. This idea underlies the distributed (sub)gradient methods proposed in \cite{nedic2009distributed} and \cite{nedic2010constrained} to solve problem \eqref{eq: Unconstrained optimization problem} with all $f_i$ convex. In the case of diminishing step-size, each agent converges to an optimizer \cite{nedic2010constrained}; with constant step-size, convergence is typically faster, but only to the vicinity of an optimizer \cite{nedic2009distributed}.

The focus in this paper is on consensus-based distributed (sub)gradient methods, for the simplicity as first-order methods, enabling easy adaptation to diverse situations. In particular, a fixed step-size \textbf{D}istributed \textbf{G}radient \textbf{D}escent (\textbf{DGD}) algorithm is considered, as an instance of the distributed (sub)gradient methods. In unperturbed form, the update for each agent $i\in \mathcal{V}$ at iteration $k$ is given by
\begin{align}
\label{eq: DGD}
    \hat{\mathbf{x}}^{k+1}_{i} = \sum_{j=1}^{m} \mathbf{W}_{ij}\hat{\mathbf{x}}^{k}_j - \alpha\nabla f_{i}(\hat{\mathbf{x}}^{k}_{i}),
\end{align}
where $\alpha > 0$ is the constant step-size, $\nabla f_{i}$ is the gradient of $f_{i}$, $\hat{\mathbf{x}}_{i} \in \mathbb{R}^{n}$ is the local copy of the decision vector $\mathbf{x}$ at agent $i \in \mathcal{V}$, and $\mathbf{W}_{ij}$ is the scalar entry in the $i$-th row and $j$-th column of a given mixing matrix $\mathbf{W} \in \mathbb{R}^{m\times m}$. The mixing matrix is consistent with the graph $\mathcal{G}(\mathcal{V},\mathcal{E})$, in the sense that $\mathbf{W}_{ii} > 0$ for all $i \in \mathcal{V}$, $\mathbf{W}_{ij} > 0$ if $(i,j) \in \mathcal{E}$, and $\mathbf{W}_{ij} = 0$ otherwise. The convergence rates of fixed step-size and diminishing step-size \textbf{DGD} algorithms in \emph{strongly convex} settings are examined in \cite{yuan2016convergence} and \cite{tsianos2012distributed}, respectively.
Several variants of \textbf{DGD} have been proposed in \emph{(strongly) convex} settings. Nesterov momentum is used in the distributed gradient descent update with diminishing step-sizes in \cite{jakovetic2014fast} to improve convergence rates. Inexact proximal-gradient method is considered in \cite{chen2012fast} for problems involving non-smooth functions. To reach exact consensus with a constant step-size, 
the gradient tracking method \cite{shi2015extra,nedic2017achieving} and \cite{nedic2017geometrically} is used to neutralize $-\alpha \nabla f_{i}$ in \eqref{eq: DGD}, as the gradient descent part cannot vanish itself.

In \emph{non-convex} settings, gradient descent methods may face issues due to saddle points, as converging to a first-order stationary point does not guarantee the local minimality. While Hessian-based methods can avoid saddle points, their computational cost can be prohibitive for large-scale problems. In \cite{zeng2018nonconvex}, the fixed step-size \textbf{DGD} algorithm is shown to retain the property of convergence to a neighborhood of a consensus stationary solution under some regularity assumptions in \emph{non-convex} settings. It is also shown in \cite{daneshmand2020second} that \textbf{DGD} with a constant step-size converges almost surely to a neighborhood of second-order stationary solutions. However, this requires random initialization to avoid the zero measure manifold of saddle point attraction, and moreover, the underlying analysis does not support techniques for actively escaping saddle points.

Recently, it has been shown that standard (centralized) gradient descent methods can take exponential time to escape saddle points \cite{du2017gradient}, while noise (random perturbations) have been proven effective for escaping saddle points in \emph{non-convex} optimization. The Noisy Gradient Descent algorithm in \cite{ge2015escaping} and \cite{jin2017escape} is proven to be able to escape saddle points efficiently while converging to a neighborhood of a local minimizer with high probability. However, all of these works are limited to centralized methods. In \cite{swenson2022distributed} it is shown that distributed stochastic gradient descent converges to local minima almost surely when diminishing step sizes are used. Consider constant step-size, the diffusion strategy with stochastic gradient in \cite{vlaski2021distributed1} and \cite{vlaski2021distributed2} only returns approximately second-order stationary points rather than an outcome that lies in a neighborhood of a local minimizer with controllable size.

In this paper, the main contribution is that we analyze a fixed step-size noisy distributed gradient descent (\textbf{NDGD}) algorithm for solving the optimization problem in \eqref{eq: Unconstrained optimization problem} by expanding upon and combining ideas from \cite{ge2015escaping} and \cite{jin2017escape} on centralized stochastic gradient descent, and from \cite{yuan2016convergence,zeng2018nonconvex,daneshmand2020second} on distributed unperturbed gradient descent. In this combination, random perturbations are added to the gradient descent directions at each step to actively evade saddle points. It is established that under certain regularity conditions, and with a suitable step-size, each agent converges to a neighborhood of a local minimizer. In particular, we determine a probabilistic upper bound for the distance between the iterate at each agent and the set of local minimizers after a sufficient number of iterations. A numerical example is presented to illustrate the effectiveness of the algorithm in terms of escaping from the vicinity of a saddle point in fewer iterations than the standard (i.e., unperturbed) fixed step-size \textbf{DGD} method.
 
\subsection{Notation}
\label{sec: Notation}
Let $\mathbf{I}_n$ denote the $n\times n$ identity matrix, $\bm{1}_n$ denote the $n$-vector with all entries equal to $1$, and $\mathbf{A}_{ij}$ denote the entry in the $i$-th row and $j$-th column of the matrix $\mathbf{A}$. For a square symmetric matrix $\mathbf{B}$, we use $\lambda_{\mathrm{min}}(\mathbf{B})$, $\lambda_{\mathrm{max}}(\mathbf{B})$ and $\|\mathbf{B}\|$ to denote its minimum eigenvalue, maximum eigenvalue and spectral norm, respectively. The Kronecker product is denoted by $\otimes$. The distance from the point $\mathbf{x} \in \mathbb{R}^{n}$ to a given set $\mathcal{Y} \subseteq \mathbb{R}^{n}$ is denoted by $\textup{dist}(\mathbf{x}, \mathcal{Y}):= \inf_{\mathbf{y} \in \mathcal{Y}}\|\mathbf{x} - \mathbf{y}\|$. We say that a point $\mathbf{x}$ is $\delta$-close to a point $\mathbf{y}$ (resp., a set $\mathcal{Y}$) if $\textup{dist}(\mathbf{x},\mathbf{y}) \le \delta$ (resp., $\textup{dist}(\mathbf{x},\mathcal{Y}) \le \delta$). We use the Bachmann–Landau (asymptotic) notations including $\mathcal{O}(g(x,y))$, $\Omega(g(x,y))$ and $\Theta(g(x,y))$ to hide dependence on variables other than $x$ and $y$.


\section{Problem Setup and Supporting Results}
\label{sec: Problem Setup and Preliminaries}
In this section, we present a reformulation of the optimization problem defined in \eqref{eq: Unconstrained optimization problem} and provide a list of assumptions used in subsequent analysis. Then, we briefly recall aspects of the fixed step-size \textbf{DGD} algorithm \eqref{eq: DGD} and present some existing results for \emph{non-convex} optimization problems. Next, some intermediate results are derived to establish certain properties of the local minimizers of $f$ defined in \eqref{eq: Unconstrained optimization problem} (see Theorem \ref{the: Local minimizers for Q}), on the basis of a collection of supporting lemmas (see Lemma~\ref{lem: consensual bound on xQ*}-\ref{lem: arbitrarily close}).

\subsection{Problem Setup}
\begin{definitionii}
\label{def: First order stationary point}
    For differentiable function $h$, a point $\mathbf{x}$ is said to be first-order stationary if $\|\nabla h(\mathbf{x})\| = 0$.
\end{definitionii}

\begin{definitionii}
\label{def: Local minimizer and saddle}
    For twice differentiable function $h$, a first-order stationary point $\mathbf{x}$ is: (i) a local minimizer, if $\nabla^{2} h(\mathbf{x}) \succ 0$; (ii) a local maximizer, if $\nabla^{2} h(\mathbf{x}) \prec 0$; and (iii) a saddle point if $\lambda_{\min}(\nabla^{2} h(\mathbf{x})) < 0$ and $\lambda_{\max}(\nabla^{2} h(\mathbf{x})) > 0$.
\end{definitionii}

By introducing additional local variables, the optimization problem in \eqref{eq: Unconstrained optimization problem} can be reformulated as

\begin{gather}
    \begin{gathered}
        \label{eq: Constrained optimization problem}
        \min_{\hat{\mathbf{x}} \in (\mathbb{R}^{n})^{m}} F(\hat{\mathbf{x}}) \triangleq \min_{\hat{\mathbf{x}} \in (\mathbb{R}^{n})^{m}} \sum_{i=1}^m f_{i}(\hat{\mathbf{x}}_{i}),\\
        \textup{s.t. } \hat{\mathbf{x}}_{i} = \hat{\mathbf{x}}_{j} \text{ for all }(i,j)\in \mathcal{E},
    \end{gathered}    
\end{gather}
where $\hat{\mathbf{x}}_{i} \in \mathbb{R}^{n}$ is the local copy of the decision vector $\mathbf{x}$ at agent $i \in \mathcal{V}$, and $\hat{\mathbf{x}} = [\hat{\mathbf{x}}_{1}^{T},\cdots, \hat{\mathbf{x}}_{m}^{T}]^{T} \in (\mathbb{R}^{n})^{m}$.

\begin{assumptionii}[Local regularity]
\label{ass: Strict saddle}
    The function $f$ in \eqref{eq: Unconstrained optimization problem} is such that for all first-order stationary points $\mathbf{x}$, either $\lambda_{\min} (\nabla^{2} f (\mathbf{x})) > 0$ (i.e., $\mathbf{x}$ is a local minimizer), or $\lambda_{\min} (\nabla^{2} f (\mathbf{x})) < 0$ (i.e., $\mathbf{x}$ is a saddle point or a maximizer).
\end{assumptionii}

\begin{assumptionii}[Lipschitz gradient]
\label{ass: Smoothness gradient lipschitz} Each objective $f_{i}$ has $L_{f_i}^{g}$-Lipschitz continuous gradient, i.e.,
\begin{align*}
    \|\nabla f_{i}(\mathbf{x})
    - \nabla f_{i}(\mathbf{y})\| \le L_{f_i}^{g}
    \|\mathbf{x}-\mathbf{y}\|
\end{align*}
for all $\mathbf{x},\mathbf{y}\in\mathbb{R}^{n}$ and each $i \in \mathcal{V}$.
\end{assumptionii}

\begin{assumptionii}[Lipschitz Hessian]
\label{ass: Smoothness hessian lipschitz} 
Each objective $f_{i}$ has $L_{f_i}^{H}$-Lipschitz continuous Hessian, i.e.,
\begin{align*}
    \|\nabla^{2} f_{i}(\mathbf{x})
    - \nabla^{2} f_{i}(\mathbf{y})\| \le L_{f_i}^{H}
    \|\mathbf{x}-\mathbf{y}\|
\end{align*}
for all $\mathbf{x},\mathbf{y}\in\mathbb{R}^{n}$ and each $i \in \mathcal{V}$.
\end{assumptionii}

If Assumption \ref{ass: Smoothness gradient lipschitz} holds, then $F$ defined in \eqref{eq: Constrained optimization problem} has $L_{F}^{g}$-Lipschitz continuous gradient 
with $L_{F}^{g} = \max_{i} \{L_{f_{i}}^{g}\}$. Further, if Assumption \ref{ass: Smoothness hessian lipschitz} holds, then $F$ has $L_{F}^{H}$-Lipschitz continuous Hessian 
with $L_{F}^{H} = \max_{i} \{L_{f_{i}}^{H}\}$.

\begin{assumptionii}[Coercivity and properness]
\label{ass: Coercivity}
    Each local objective $f_i$ is coercive (i.e., its sublevel set is compact) and proper (i.e., not everywhere infinite).
\end{assumptionii}


\subsection{Distributed Gradient Descent}
\begin{assumptionii}[Network]
\label{ass: Network}
    The undirected graph $\mathcal{G}(\mathcal{V},\mathcal{E})$ is connected.
\end{assumptionii}

The \textbf{DGD} algorithm in \eqref{eq: DGD}, with constant step-size $\alpha>0$, can be written in a matrix/vector form as
\begin{align}
\label{eq: DGD matrix form}
    \hat{\mathbf{x}}^{k+1} = \hat{\mathbf{W}} \hat{\mathbf{x}}^{k} - \alpha \nabla F(\hat{\mathbf{x}}^{k}),
\end{align}
where $\hat{\mathbf{W}} := \mathbf{W} \otimes \mathbf{I}_n$. Note that from this point on, the mixing matrix $\mathbf{W}$ is taken to be symmetric, doubly stochastic and strictly diagonally dominant, i.e., $\mathbf{W}_{ii} > \sum_{j \ne i} \mathbf{W}_{ij}$ for all $i \in \mathcal{V}$. Thus, $\mathbf{W}$ is positive definite by the Gershgorin circle theorem. As proposed in some early works, including \cite{yuan2016convergence, zeng2018nonconvex, daneshmand2020second}, we can analyze the convergence properties using an auxiliary function. Let the $Q_{\alpha}$ denote the auxiliary function,
\begin{align}
\label{eq: Q}
    Q_{\alpha}(\hat{\mathbf{x}}) &= \sum_{i=1}^m f_{i}(\hat{\mathbf{x}}_{i}) + \frac{1}{2\alpha}\sum_{i=1}^m\sum_{j=1}^m(\mathbf{I}_{m} - \mathbf{W})_{ij} (\hat{\mathbf{x}}_{i})^{T}(\hat{\mathbf{x}}_{j}) \nonumber\\
    &= F(\hat{\mathbf{x}}) + \frac{1}{2\alpha}\|\hat{\mathbf{x}}\|^{2}_{\mathbf{I}_{mn}-\hat{\mathbf{W}}},
\end{align}
consisting of the objective function in \eqref{eq: Constrained optimization problem} and a quadratic penalty, which depends on the step-size and the mixing matrix. We use $\hat{\mathbf{x}}^{*}$ to denote a local minimizer of $Q_{\alpha}$. Note that the \textbf{DGD} update \eqref{eq: DGD matrix form} applied to \eqref{eq: Constrained optimization problem} can be interpreted as an instance of the standard gradient descent algorithm applied to \eqref{eq: Q}, i.e.,
\begin{align}
\label{eq: GD}
    \hat{\mathbf{x}}^{k+1} = \hat{\mathbf{x}}^{k} - \alpha \nabla Q_{\alpha}(\hat{\mathbf{x}}^{k}).
\end{align}
Thus, iteratively running \eqref{eq: DGD matrix form} and \eqref{eq: GD} from the same initialization yields the same sequence of iterates.

If Assumption \ref{ass: Smoothness gradient lipschitz} holds, then $Q_{\alpha}$ defined in \eqref{eq: Q} has $L_{Q_{\alpha}}^{g}$-Lipschitz continuous gradient 
with $L_{Q_{\alpha}}^g = L_{F}^{g} + \alpha^{-1} (1-\lambda_{\min}(\mathbf{W})) = \max_{i}\{L_{f_{i}}^{g}\} + \alpha^{-1} (1-\lambda_{\min}(\mathbf{W}))$. We have that $1-\lambda_{\min}(\mathbf{W}) \ge 0$ because the spectrum of a symmetric, positive definite and doubly stochastic matrix is contained in the interval $(0,1]$ by the Perron–Frobenius theorem, with $1$ being the only largest eigenvalue (the Perron root). Further, if Assumption \ref{ass: Smoothness hessian lipschitz} holds, then $Q_{\alpha}$ has $L_{Q_{\alpha}}^{H}$-Lipschitz continuous Hessian
with $L_{Q_{\alpha}}^{H} = \max_{i}\{L_{f_{i}}^{H}\} =  L_{F}^{H}$.

\subsection{Relationships between Local minimizers of $f$ and \texorpdfstring{$Q_{\alpha}$}{Lg}}
In this section, we prove that $\hat{\mathbf{x}}_{i}^{*}$, the component of a local minimizer $\hat{\mathbf{x}}^{*}$ of $Q_{\alpha}$ associated with agent $i$, can be made arbitrarily close to the set of local minimizers $\mathbf{x}^{*}$ of $f$ by choosing sufficiently small $\alpha > 0$. The proof is based on a collection of intermediate results. First, Lemma~\ref{lem: consensual bound on xQ*} shows that by choosing sufficiently small step-size $\alpha > 0$, at a local minimizer of $Q_{\alpha}$, the component corresponding to agent $i$ can be arbitrarily close to $\Bar{\mathbf{x}}^{*}$, where $\Bar{\mathbf{x}}^{*}$ denotes the average of $\hat{\mathbf{x}}_{i}^{*}$ across all agents. Lemma~\ref{lem: first order guarantee for xhat*} and Lemma~\ref{lem: second order guarantee for xhat*} show that given $\epsilon_{g}>0$ and $\epsilon_{H}>0$, one can always find constant step-size $\alpha > 0$ such that $\| \nabla f(\Bar{\mathbf{x}}^{*}) \| \le \epsilon_{g}$ and $\lambda_{\min}( \nabla^{2} f(\Bar{\mathbf{x}}^{*})) \ge -\epsilon_{H}$. Finally, Lemma~\ref{lem: arbitrarily close} shows that for each agent, $\hat{\mathbf{x}}_{i}^{*}$ can be made arbitrarily close to a local minimizer of $f$, by choosing a sufficiently small step-size $\alpha > 0$. Let $\mathcal{X}_{f}^{*}$ and $\hat{\mathcal{X}}_{Q_{\alpha}}^{*}$ denote the set of local minimizers of $f$ and $Q_{\alpha}$, respectively:
\begin{align}
\label{eq: minimizer set}
    \begin{aligned}
        \mathcal{X}_{f}^{*} :=\{\mathbf{x} \in \mathbb{R}^{n}:&~\nabla f(\mathbf{x})=0,~\nabla^2 f(\mathbf{x}) \succ 0\},\\
        \hat{\mathcal{X}}_{Q_{\alpha}}^{*} := \{\hat{\mathbf{x}} \in (\mathbb{R}^{n})^{m}:&~\nabla Q_{\alpha}(\hat{\mathbf{x}}) = 0,~\nabla^2 Q_{\alpha}(\hat{\mathbf{x}}) \succ 0\}. 
    \end{aligned}
\end{align}

\begin{lemmaiii}
\label{lem: consensual bound on xQ*}
    Let Assumption~\ref{ass: Network} hold. Given $\alpha > 0$, let $\hat{\mathbf{x}}^{*}$ be a local minimizer of $Q_{\alpha}$. Then, for each $i\in\mathcal{V}$,
    \begin{align*}
        \|\hat{\mathbf{x}}_{i}^{*} - \bar{\mathbf{x}}^{*}\| \le \alpha \cdot \frac{\|\nabla F(\hat{\mathbf{x}}^{*}) \|}{1-\lambda_{2}},
    \end{align*}
    where $\bar{\mathbf{x}}^{*} =  \frac{1}{m} (\mathbf{1}_{m} \otimes \mathbf{I}_{n})^{T} \hat{\mathbf{x}}^{*}$, and $0 < \lambda_{2} < 1$ is the second-largest eigenvalue value of $\mathbf{W}$.
\end{lemmaiii}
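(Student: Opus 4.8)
The plan is to exploit the first-order stationarity of $Q_{\alpha}$ at $\hat{\mathbf{x}}^{*}$ together with the spectral structure of the mixing matrix $\mathbf{W}$. Since $\hat{\mathbf{x}}^{*}$ is a local minimizer of $Q_{\alpha}$, it is in particular first-order stationary, so $\nabla Q_{\alpha}(\hat{\mathbf{x}}^{*}) = 0$. Differentiating the expression for $Q_{\alpha}$ in \eqref{eq: Q} (noting $\mathbf{I}_{mn} - \hat{\mathbf{W}}$ is symmetric), the gradient is $\nabla Q_{\alpha}(\hat{\mathbf{x}}) = \nabla F(\hat{\mathbf{x}}) + \alpha^{-1}(\mathbf{I}_{mn} - \hat{\mathbf{W}})\hat{\mathbf{x}}$, so the stationarity condition rearranges to the key identity $(\mathbf{I}_{mn} - \hat{\mathbf{W}})\hat{\mathbf{x}}^{*} = -\alpha\,\nabla F(\hat{\mathbf{x}}^{*})$, where $\mathbf{I}_{mn} - \hat{\mathbf{W}} = (\mathbf{I}_{m} - \mathbf{W}) \otimes \mathbf{I}_{n}$.

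Next I would separate $\hat{\mathbf{x}}^{*}$ into its consensus and disagreement parts, writing $\hat{\mathbf{x}}^{*} = (\mathbf{1}_{m} \otimes \bar{\mathbf{x}}^{*}) + \mathbf{d}$ with disagreement vector $\mathbf{d} := \hat{\mathbf{x}}^{*} - \mathbf{1}_{m} \otimes \bar{\mathbf{x}}^{*}$, whose $i$-th block is precisely $\hat{\mathbf{x}}_{i}^{*} - \bar{\mathbf{x}}^{*}$. Because $\mathbf{W}$ is doubly stochastic, $\mathbf{1}_{m}$ is the Perron eigenvector with eigenvalue $1$, so $(\mathbf{I}_{m} - \mathbf{W})\mathbf{1}_{m} = 0$ and hence $(\mathbf{I}_{mn} - \hat{\mathbf{W}})(\mathbf{1}_{m} \otimes \bar{\mathbf{x}}^{*}) = 0$. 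The consensus component is therefore annihilated, and the key identity collapses to $(\mathbf{I}_{mn} - \hat{\mathbf{W}})\mathbf{d} = -\alpha\,\nabla F(\hat{\mathbf{x}}^{*})$.

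The crux is a spectral lower bound on $\mathbf{I}_{mn} - \hat{\mathbf{W}}$ restricted to the disagreement subspace. Diagonalizing the symmetric, doubly stochastic, positive definite matrix $\mathbf{W}$, its eigenvalues obey $1 = \lambda_{1} > \lambda_{2} \ge \cdots \ge \lambda_{m} > 0$, with $\mathbf{1}_{m}$ spanning the eigenspace of $\lambda_{1} = 1$. Consequently $\mathbf{I}_{m} - \mathbf{W}$ vanishes on $\mathrm{span}(\mathbf{1}_{m})$ and has smallest nonzero eigenvalue $1 - \lambda_{2}$ on its orthogonal complement; by the Kronecker structure the same holds for $\mathbf{I}_{mn} - \hat{\mathbf{W}}$ on $\mathrm{span}(\mathbf{1}_{m}) \otimes \mathbb{R}^{n}$ and its complement. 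Since $\mathbf{d}$ is by construction orthogonal to $\mathbf{1}_{m} \otimes \mathbb{R}^{n}$, it lies in the subspace on which the operator is bounded below by $1 - \lambda_{2}$, giving $\|(\mathbf{I}_{mn} - \hat{\mathbf{W}})\mathbf{d}\| \ge (1 - \lambda_{2})\|\mathbf{d}\|$.

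Combining the two preceding relations yields $(1 - \lambda_{2})\|\mathbf{d}\| \le \|(\mathbf{I}_{mn} - \hat{\mathbf{W}})\mathbf{d}\| = \alpha\|\nabla F(\hat{\mathbf{x}}^{*})\|$, so $\|\mathbf{d}\| \le \alpha\|\nabla F(\hat{\mathbf{x}}^{*})\|/(1 - \lambda_{2})$. Finally, as $\hat{\mathbf{x}}_{i}^{*} - \bar{\mathbf{x}}^{*}$ is a single block of $\mathbf{d}$, its Euclidean norm is at most $\|\mathbf{d}\|$, which delivers the claimed bound for every $i \in \mathcal{V}$. I expect the main obstacle to be the spectral bookkeeping around the Kronecker product: verifying carefully that $\mathbf{d}$ genuinely lies in the orthogonal complement of the consensus subspace, and that $1 - \lambda_{2}$ — rather than a smaller quantity — is the exact constant controlling the restricted operator, so that the resulting inequality is tight.
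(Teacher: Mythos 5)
Your proof is correct, and it takes a genuinely different route from the paper's. Both arguments start from the same stationarity identity $(\mathbf{I}_{mn}-\hat{\mathbf{W}})\hat{\mathbf{x}}^{*} = -\alpha\nabla F(\hat{\mathbf{x}}^{*})$, but the paper then treats $\hat{\mathbf{x}}^{*}$ as a fixed point of the \textbf{DGD} map $\hat{\mathbf{x}} \mapsto \hat{\mathbf{W}}\hat{\mathbf{x}} - \alpha\nabla F(\hat{\mathbf{x}})$, unrolls it $s$ times to get $\hat{\mathbf{x}}^{*} = \hat{\mathbf{W}}^{s+1}\hat{\mathbf{x}}^{*} - \alpha\sum_{t=0}^{s}\hat{\mathbf{W}}^{t}\nabla F(\hat{\mathbf{x}}^{*})$, uses the fact that $(\bm{1}_{m}\otimes\mathbf{I}_{n})^{T}\nabla F(\hat{\mathbf{x}}^{*})=0$ to bound $\|\hat{\mathbf{W}}^{t}\nabla F(\hat{\mathbf{x}}^{*})\| \le \lambda_{2}^{t}\|\nabla F(\hat{\mathbf{x}}^{*})\|$, and finally sums the geometric series and passes to the limit $s\to\infty$, where $\hat{\mathbf{W}}^{s+1}\hat{\mathbf{x}}^{*} \to \bm{1}_{m}\otimes\bar{\mathbf{x}}^{*}$. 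You instead split $\hat{\mathbf{x}}^{*}$ into consensus plus disagreement, check that the disagreement vector $\mathbf{d}$ is orthogonal to the consensus subspace (which indeed follows directly from the definition $\bar{\mathbf{x}}^{*} = \frac{1}{m}(\bm{1}_{m}\otimes\mathbf{I}_{n})^{T}\hat{\mathbf{x}}^{*}$), and invert $\mathbf{I}_{mn}-\hat{\mathbf{W}}$ on that invariant subspace, where its smallest eigenvalue is exactly $1-\lambda_{2}$. Your one-shot spectral bound avoids any limiting argument and makes transparent why $1-\lambda_{2}$ is the right constant; the paper's unrolling argument, by contrast, stays closer to the consensus-averaging dynamics of the algorithm itself and sidesteps explicit invariant-subspace bookkeeping. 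Both yield the identical bound $\alpha\|\nabla F(\hat{\mathbf{x}}^{*})\|/(1-\lambda_{2})$, and both ultimately rest on the same two ingredients: stationarity of $Q_{\alpha}$ at $\hat{\mathbf{x}}^{*}$, and the strict spectral gap $\lambda_{2}<1$ guaranteed by connectivity and the Perron--Frobenius theorem, which you should (as the paper does) cite explicitly when asserting that $\bm{1}_{m}$ spans the full eigenspace of the eigenvalue $1$.
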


\begin{proof}
    Since $\hat{\mathbf{x}}^{*}$ is a local minimizer of $Q_{\alpha}$, we have $\nabla Q_{\alpha}(\hat{\mathbf{x}}^{*}) = \nabla F(\hat{\mathbf{x}}^{*}) + \alpha^{-1}(\mathbf{I}_{mn} - \hat{\mathbf{W}})\hat{\mathbf{x}}^{*}=0$, and thus,
    \begin{align*}
        \hat{\mathbf{x}}^{*} = \hat{\mathbf{W}} \hat{\mathbf{x}}^{*} - \alpha \nabla F(\hat{\mathbf{x}}^{*}) = \hat{\mathbf{W}}^{s+1} \hat{\mathbf{x}}^{*} - \alpha\sum_{t=0}^{s} \hat{\mathbf{W}}^{t} \nabla F(\hat{\mathbf{x}}^{*})
    \end{align*}
    for all $s \in \mathbb{N}$. Therefore, we have that 
    $\|\hat{\mathbf{x}}^{*} - \hat{\mathbf{W}}^{s+1} \hat{\mathbf{x}}^{*}\| = \alpha \|\sum_{t=0}^{s} \hat{\mathbf{W}}^{t} \nabla F(\hat{\mathbf{x}}^{*})\|
        \le \alpha \sum_{t=0}^{s} \|\hat{\mathbf{W}}^{t} \nabla F(\hat{\mathbf{x}}^{*})\|$.
    Now, since $(\bm{1}_{m} \otimes \mathbf{I}_{n})^{T}  (\mathbf{I}_{mn} -\hat{\mathbf{W}})\hat{\mathbf{x}}^{*} = 0$, we have $(\bm{1}_{m} \otimes \mathbf{I}_{n})^{T} \nabla F(\hat{\mathbf{x}}^{*}) = (\bm{1}_{m} \otimes \mathbf{I}_{n})^{T} (\nabla Q_{\alpha}(\hat{\mathbf{x}}^{*}) - \alpha^{-1} (\mathbf{I}_{mn} -\hat{\mathbf{W}})\hat{\mathbf{x}}^{*}) = 0$. Further, by connectivity of the underlying communication graph (see Assumption~\ref{ass: Network}) and the Perron–Frobenius theorem, $0 < \lambda_2 < 1$, and as such,
    \begin{align*}
        \|\hat{\mathbf{x}}_{i}^{*} - \hat{\mathbf{W}}^{s+1} \hat{\mathbf{x}}^{*}\| &\le \alpha \sum_{t=0}^{s} \|(\mathbf{W}^{t} - \frac{\bm{1}_{m}\bm{1}_{m}^{T}}{m}) \otimes \mathbf{I}_{n} \cdot \nabla F(\hat{\mathbf{x}}^{*})\|\\
        &\le \alpha \sum_{t=0}^{s} \lambda_{2}^{t} \|\nabla F(\hat{\mathbf{x}}^{*})\|,
    \end{align*}
    where the last inequality holds because only the largest eigenvalue of $\mathbf{W}^{T}$ is $1$. In the limit $s \rightarrow \infty$, it follows that
    \begin{align*}
        \|\hat{\mathbf{x}}_{i}^{*} - \bar{\mathbf{x}}^{*}\| = \lim_{s \rightarrow \infty} \|\hat{\mathbf{x}}_{i}^{*} - \hat{\mathbf{W}}^{s+1} \hat{\mathbf{x}}^{*}\| \le \alpha \cdot \frac{\|\nabla F(\hat{\mathbf{x}}^{*})\|}{1-\lambda_{2}}
    \end{align*}
    as claimed.
\end{proof}

\begin{lemmaiii}
    \label{lem: first order guarantee for xhat*}
    Let Assumptions~\ref{ass: Smoothness gradient lipschitz}, \ref{ass: Network} hold. Given $\alpha > 0$, let $\hat{\mathbf{x}}^{*}$ be a local minimizer of $Q_{\alpha}$. Then
    \begin{align*}
        \| \nabla f(\Bar{\mathbf{x}}^{*}) \| \le \alpha \cdot L_{F}^{g} \frac{m\sqrt{m} \|\nabla F(\hat{\mathbf{x}}^{*}) \|}{1-\lambda_{2}},
    \end{align*}
    where $\Bar{\mathbf{x}}^{*} = \frac{1}{m}(\bm{1}_{m} \otimes \mathbf{I}_{n})^{T} \hat{\mathbf{x}}^{*}$ and $0 < \lambda_{2} < 1$ is the second-largest eigenvalue value of $\mathbf{W}$.
\end{lemmaiii}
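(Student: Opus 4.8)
The plan is to leverage the first-order stationarity of $Q_{\alpha}$ at $\hat{\mathbf{x}}^{*}$, together with the consensus bound already supplied by Lemma~\ref{lem: consensual bound on xQ*}. The starting point is the identity $(\bm{1}_{m}\otimes\mathbf{I}_{n})^{T}\nabla F(\hat{\mathbf{x}}^{*})=\sum_{i=1}^{m}\nabla f_{i}(\hat{\mathbf{x}}_{i}^{*})=0$, which is in fact derived partway through the proof of Lemma~\ref{lem: consensual bound on xQ*}. Since $f=\sum_{i=1}^{m}f_{i}$ in \eqref{eq: Unconstrained optimization problem} gives $\nabla f(\bar{\mathbf{x}}^{*})=\sum_{i=1}^{m}\nabla f_{i}(\bar{\mathbf{x}}^{*})$, I would subtract the vanishing sum $\sum_{i}\nabla f_{i}(\hat{\mathbf{x}}_{i}^{*})$ to write $\nabla f(\bar{\mathbf{x}}^{*})=\sum_{i=1}^{m}\big(\nabla f_{i}(\bar{\mathbf{x}}^{*})-\nabla f_{i}(\hat{\mathbf{x}}_{i}^{*})\big)$. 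This is the key step: it re-expresses the target quantity purely in terms of the disagreement between each local component $\hat{\mathbf{x}}_{i}^{*}$ and the average $\bar{\mathbf{x}}^{*}$, which is exactly what the preceding lemma controls.

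From here the argument is a sequence of norm estimates. Applying the triangle inequality and then the $L_{F}^{g}$-Lipschitz gradient property (Assumption~\ref{ass: Smoothness gradient lipschitz}, using $L_{f_{i}}^{g}\le L_{F}^{g}$ for each $i$) yields $\|\nabla f(\bar{\mathbf{x}}^{*})\|\le L_{F}^{g}\sum_{i=1}^{m}\|\bar{\mathbf{x}}^{*}-\hat{\mathbf{x}}_{i}^{*}\|$. Each summand is the per-agent consensus error bounded in Lemma~\ref{lem: consensual bound on xQ*} by $\alpha\,\|\nabla F(\hat{\mathbf{x}}^{*})\|/(1-\lambda_{2})$, so substituting and summing over the $m$ agents delivers a bound of the required form, proportional to $\alpha L_{F}^{g}\|\nabla F(\hat{\mathbf{x}}^{*})\|/(1-\lambda_{2})$ times a power of $m$.

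The one point requiring care — and the step I expect to be the main obstacle — is reproducing the exact agent-count factor $m\sqrt{m}$. A direct substitution of the per-component bound into $\sum_{i=1}^{m}\|\bar{\mathbf{x}}^{*}-\hat{\mathbf{x}}_{i}^{*}\|$ already gives the tighter factor $m$. The stated $m\sqrt{m}$ arises if one instead routes the estimate through the stacked consensus error $\bar{\hat{\mathbf{x}}}^{*}:=\bm{1}_{m}\otimes\bar{\mathbf{x}}^{*}$, bounding $\sum_{i}\|\bar{\mathbf{x}}^{*}-\hat{\mathbf{x}}_{i}^{*}\|\le m\max_{i}\|\bar{\mathbf{x}}^{*}-\hat{\mathbf{x}}_{i}^{*}\|\le m\,\|\hat{\mathbf{x}}^{*}-\bar{\hat{\mathbf{x}}}^{*}\|$ and then using $\|\hat{\mathbf{x}}^{*}-\bar{\hat{\mathbf{x}}}^{*}\|=\big(\sum_{i}\|\hat{\mathbf{x}}_{i}^{*}-\bar{\mathbf{x}}^{*}\|^{2}\big)^{1/2}\le\sqrt{m}\,\alpha\,\|\nabla F(\hat{\mathbf{x}}^{*})\|/(1-\lambda_{2})$ via Lemma~\ref{lem: consensual bound on xQ*}. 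This conversion is admittedly conservative, but the looser constant is harmless: in the subsequent analysis only the linear-in-$\alpha$ scaling is used, so the $\sqrt{m}$ slack does not affect the qualitative conclusion that $\|\nabla f(\bar{\mathbf{x}}^{*})\|\to 0$ as $\alpha\to 0$.
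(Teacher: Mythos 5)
Your proof is correct and follows essentially the same route as the paper: both hinge on the stationarity identity $(\bm{1}_{m}\otimes\mathbf{I}_{n})^{T}\nabla F(\hat{\mathbf{x}}^{*})=0$, Assumption~\ref{ass: Smoothness gradient lipschitz}, and Lemma~\ref{lem: consensual bound on xQ*}, the only difference being that the paper works in stacked form, bounding $\|(\bm{1}_{m}\otimes\mathbf{I}_{n})^{T}(\nabla F(\bm{1}_{m}\otimes\bar{\mathbf{x}}^{*})-\nabla F(\hat{\mathbf{x}}^{*}))\| \le \sqrt{m}\,L_{F}^{g}\,\|\bm{1}_{m}\otimes\bar{\mathbf{x}}^{*}-\hat{\mathbf{x}}^{*}\|$, whereas you sum over agents componentwise. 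Your sharper constant $m$ implies the stated bound since $m\le m\sqrt{m}$, and your account of how the paper's conservative $m\sqrt{m}$ factor arises is exactly right.
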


\begin{proof}
    Since $(\bm{1}_{m} \otimes \mathbf{I}_{n})^{T} \nabla F(\hat{\mathbf{x}}^{*}) = (\bm{1}_{m} \otimes \mathbf{I}_{n})^{T} (\nabla Q_{\alpha}(\hat{\mathbf{x}}^{*}) - \alpha^{-1}(\mathbf{I}_{mn}-\hat{\mathbf{W}})\hat{\mathbf{x}}^{*}) = 0$, by  Assumption \ref{ass: Smoothness gradient lipschitz} and Lemma \ref{lem: consensual bound on xQ*},
    \begin{align*}
        \| \nabla f(\Bar{\mathbf{x}}^{*}) \| &= \|(\bm{1}_{m} \otimes \mathbf{I}_{n})^{T} (\nabla F (\bm{1}_{m} \otimes \Bar{\mathbf{x}}^{*}) - \nabla F (\hat{\mathbf{x}}^{*}))\|\\
        & \le \sqrt{m} L_{F}^{g} \| \bm{1}_{m} \otimes \Bar{\mathbf{x}}^{*} - \hat{\mathbf{x}}^{*} \|\\
        & \le \alpha \cdot L_{F}^{g} \frac{m\sqrt{m} \|\nabla F(\hat{\mathbf{x}}^{*}) \|}{1-\lambda_{2}}
    \end{align*}
    as claimed.
\end{proof}

\begin{lemmaiii}
    \label{lem: second order guarantee for xhat*}
    Let Assumptions~\ref{ass: Smoothness hessian lipschitz}, \ref{ass: Network} hold. Given $\alpha > 0$, let $\hat{\mathbf{x}}^{*}$ be a local minimizer of $Q_{\alpha}$. Then
    \begin{align*}
        \lambda_{\min}(\nabla^{2} f (\Bar{\mathbf{x}}^{*})) \ge -\alpha \cdot L_{F}^{H} \frac{m^{2} \|\nabla F(\hat{\mathbf{x}}^{*}) \|}{1-\lambda_{2}},
    \end{align*}
    where $\Bar{\mathbf{x}}^{*} = \frac{1}{m}(\bm{1}_{m} \otimes \mathbf{I}_{n})^{T} \hat{\mathbf{x}}^{*}$ and $0 < \lambda_{2} < 1$ is the second-largest eigenvalue value of $\mathbf{W}$.
\end{lemmaiii}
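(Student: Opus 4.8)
The plan is to read off a second-order necessary condition at $\hat{\mathbf{x}}^{*}$ and transport it to $\bar{\mathbf{x}}^{*}$ by a congruence along the consensus direction. Since $\hat{\mathbf{x}}^{*}$ is a local minimizer of $Q_{\alpha}$, it satisfies $\nabla^{2} Q_{\alpha}(\hat{\mathbf{x}}^{*}) = \nabla^{2} F(\hat{\mathbf{x}}^{*}) + \alpha^{-1}(\mathbf{I}_{mn} - \hat{\mathbf{W}}) \succeq 0$. Writing $M := \bm{1}_{m} \otimes \mathbf{I}_{n}$ and using that $\nabla^{2} F$ is block-diagonal with blocks $\nabla^{2} f_{i}$, I would first record the identity $M^{T} \nabla^{2} F(\bm{1}_{m} \otimes \bar{\mathbf{x}}^{*}) M = \sum_{i=1}^{m} \nabla^{2} f_{i}(\bar{\mathbf{x}}^{*}) = \nabla^{2} f(\bar{\mathbf{x}}^{*})$, which is the link between the averaged Hessian of $f$ and the stacked Hessian of $F$.

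Next I would apply the congruence $A \mapsto M^{T} A M$ to the necessary condition. The penalty term disappears because $\mathbf{W}\bm{1}_{m} = \bm{1}_{m}$ by double stochasticity, so $M^{T}(\mathbf{I}_{mn} - \hat{\mathbf{W}})M = (\bm{1}_{m}^{T}(\mathbf{I}_{m} - \mathbf{W})\bm{1}_{m}) \otimes \mathbf{I}_{n} = 0$, leaving $M^{T} \nabla^{2} F(\hat{\mathbf{x}}^{*}) M \succeq 0$. Setting $E := \nabla^{2} F(\bm{1}_{m} \otimes \bar{\mathbf{x}}^{*}) - \nabla^{2} F(\hat{\mathbf{x}}^{*})$ and combining with the identity above gives
\[
\nabla^{2} f(\bar{\mathbf{x}}^{*}) = M^{T} \nabla^{2} F(\hat{\mathbf{x}}^{*}) M + M^{T} E M \succeq M^{T} E M,
\]
whence, by Weyl's inequality, $\lambda_{\min}(\nabla^{2} f(\bar{\mathbf{x}}^{*})) \ge \lambda_{\min}(M^{T} E M) \ge -\|M\|^{2}\,\|E\|$.

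Finally I would estimate the two factors. Since $M^{T} M = m\mathbf{I}_{n}$, we have $\|M\|^{2} = m$. The $L_{F}^{H}$-Lipschitz continuity of $\nabla^{2} F$ (Assumption~\ref{ass: Smoothness hessian lipschitz}) gives $\|E\| \le L_{F}^{H}\,\|\bm{1}_{m} \otimes \bar{\mathbf{x}}^{*} - \hat{\mathbf{x}}^{*}\|$, and bounding the consensus deviation blockwise with Lemma~\ref{lem: consensual bound on xQ*} yields $\|\bm{1}_{m} \otimes \bar{\mathbf{x}}^{*} - \hat{\mathbf{x}}^{*}\| \le \sum_{i=1}^{m} \|\bar{\mathbf{x}}^{*} - \hat{\mathbf{x}}_{i}^{*}\| \le m\alpha\,\|\nabla F(\hat{\mathbf{x}}^{*})\|/(1-\lambda_{2})$. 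Substituting produces exactly the claimed inequality, with the factor $m^{2}$ arising as $m$ from $\|M\|^{2}$ times $m$ from the blockwise consensus estimate.

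The step I expect to require the most care is confirming that it is the necessary condition $\nabla^{2} Q_{\alpha}(\hat{\mathbf{x}}^{*}) \succeq 0$ (rather than strict definiteness) that must survive the congruence, together with the exact vanishing $M^{T}(\mathbf{I}_{mn} - \hat{\mathbf{W}})M = 0$; once these structural facts are pinned down, the eigenvalue and norm estimates are routine.
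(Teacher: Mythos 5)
Your proposal is correct and follows essentially the same route as the paper's proof: both apply the congruence $M^{T}(\cdot)M$ with $M=\bm{1}_{m}\otimes\mathbf{I}_{n}$ to the second-order necessary condition $\nabla^{2}Q_{\alpha}(\hat{\mathbf{x}}^{*})\succeq 0$ (under which the penalty term vanishes since $\mathbf{W}\bm{1}_{m}=\bm{1}_{m}$), use the identity $M^{T}\nabla^{2}F(\bm{1}_{m}\otimes\bar{\mathbf{x}}^{*})M=\nabla^{2}f(\bar{\mathbf{x}}^{*})$, and then bound the Hessian difference via the Lipschitz-Hessian assumption together with the consensus bound of Lemma~\ref{lem: consensual bound on xQ*}. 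Your accounting of the $m^{2}$ factor ($m$ from $\|M\|^{2}$ times $m$ from the blockwise consensus estimate) matches the paper's computation.
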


\begin{proof}
    Since $(\bm{1}_{m} \otimes \mathbf{I}_{n})^{T} \nabla^{2} F(\hat{\mathbf{x}}^{*})  (\bm{1}_{m} \otimes \mathbf{I}_{n}) = (\bm{1}_{m} \otimes \mathbf{I}_{n})^{T} (\nabla^{2} Q_{\alpha}(\hat{\mathbf{x}}^{*}) - \frac{1}{\alpha}(\mathbf{I}_{mn}-\hat{\mathbf{W}})) (\bm{1}_{m} \otimes \mathbf{I}_{n}) = (\bm{1}_{m} \otimes \mathbf{I}_{n})^{T} (\nabla^{2} Q_{\alpha}(\hat{\mathbf{x}}^{*})) (\bm{1}_{m} \otimes \mathbf{I}_{n}) \succeq 0$, by Assumption \ref{ass: Smoothness hessian lipschitz}, we have
    \begin{align*}
        \nabla^{2} &f(\bar{\mathbf{x}}^{*}) = (\bm{1}_{m} \otimes \mathbf{I}_{n})^{T} \nabla^{2} F(\bm{1}_{m} \otimes \bar{\mathbf{x}}^{*}) (\bm{1}_{m} \otimes \mathbf{I}_{n})\\
        &\succeq (\bm{1}_{m} \otimes \mathbf{I}_{n})^{T} ( \nabla^{2} F(\bm{1}_{m} \otimes \bar{\mathbf{x}}^{*}) - \nabla^{2} F(\hat{\mathbf{x}}^{*}) ) (\bm{1}_{m} \otimes \mathbf{I}_{n})\\
        &\succeq -m L_{F}^{H}  \| \hat{\mathbf{x}}^{*} - \bm{1}_{m} \otimes \bar{\mathbf{x}}^{*} \| \mathbf{I}_{n},
    \end{align*}
    where the last inequality holds because $\textbf{A} \succeq -\|\textbf{A}\| \mathbf{I}_{d}$ for any square matrix $\textbf{A}$, where $\|\mathbf{A}\| = \max_{\mathbf{x}^{T} \mathbf{x}=1} \mathbf{x}^{T} \mathbf{A} \mathbf{x}$. So by Lemma~\ref{lem: consensual bound on xQ*},
    \begin{align*}
        \lambda_{\min}( \nabla^{2} f(\bar{\mathbf{x}}^{*})) \ge - \alpha \cdot L_{F}^{H} \frac{m^{2} \|\nabla F(\hat{\mathbf{x}}^{*}) \|}{1-\lambda_{2}}
    \end{align*}
    as claimed.
\end{proof}

\begin{lemmaiii}
    \label{lem: arbitrarily close}
    Let Assumptions \ref{ass: Strict saddle}, \ref{ass: Smoothness gradient lipschitz}, \ref{ass: Smoothness hessian lipschitz} hold. Then, for any given compact set $\mathcal{X} \subset \mathbb{R}^{n}$,
    \begin{align*}
        \lim_{\alpha\downarrow0} (\sup\{\textup{dist}(\mathbf{x}, \mathcal{X}_{f}^{*}): \mathbf{x} \in {\mathcal{X}^{\alpha}_{f}} \cap \mathcal{X}\}) = 0,
    \end{align*}
    where $\mathcal{X}_{f}^{\alpha} := \{ \mathbf{x}: \|\nabla f(\mathbf{x})\| \le \alpha \cdot c_{1}, \lambda_{\min} (\nabla^{2} f(\mathbf{x})) \ge -\alpha \cdot c_{2}\}$, with
    \begin{align*}
        c_{1} = L_{F}^{g} \frac{m\sqrt{m} \|\nabla F(\hat{\mathbf{x}}^{*}) \|}{1-\lambda_{2}},~
        c_{2} = L_{F}^{H} \frac{m^{2} \|\nabla F(\hat{\mathbf{x}}^{*}) \|}{1-\lambda_{2}}.
    \end{align*}
\end{lemmaiii}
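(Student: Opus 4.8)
The plan is to argue by contradiction, exploiting compactness of $\mathcal{X}$ together with continuity of $\nabla f$ and of $\lambda_{\min}(\nabla^{2} f(\cdot))$, and then to invoke the strict-saddle dichotomy of Assumption~\ref{ass: Strict saddle} to eliminate the degenerate limiting case. First I would suppose the claim fails, so that there exist $\epsilon > 0$, a sequence $\alpha_{k} \downarrow 0$, and points $\mathbf{x}_{k} \in \mathcal{X}_{f}^{\alpha_{k}} \cap \mathcal{X}$ with $\textup{dist}(\mathbf{x}_{k}, \mathcal{X}_{f}^{*}) > \epsilon$ for every $k$. Only the case where these intersections are non-empty needs treatment; if $\mathcal{X}_{f}^{\alpha} \cap \mathcal{X}$ is empty for small $\alpha$ the supremum is vacuous and the claim is immediate.

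Next, using compactness of $\mathcal{X}$, I would pass to a subsequence so that $\mathbf{x}_{k} \to \mathbf{x}^{\infty} \in \mathcal{X}$. The membership $\mathbf{x}_{k} \in \mathcal{X}_{f}^{\alpha_{k}}$ supplies the two bounds $\|\nabla f(\mathbf{x}_{k})\| \le \alpha_{k} c_{1}$ and $\lambda_{\min}(\nabla^{2} f(\mathbf{x}_{k})) \ge -\alpha_{k} c_{2}$. Since Assumption~\ref{ass: Smoothness gradient lipschitz} makes $\nabla f$ continuous, letting $k \to \infty$ in the first bound gives $\nabla f(\mathbf{x}^{\infty}) = 0$, so $\mathbf{x}^{\infty}$ is first-order stationary. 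Since Assumption~\ref{ass: Smoothness hessian lipschitz} makes $\nabla^{2} f$ continuous and $\lambda_{\min}(\cdot)$ is $1$-Lipschitz in the spectral norm on symmetric matrices (Weyl's inequality), the map $\lambda_{\min}(\nabla^{2} f(\cdot))$ is continuous, so passing to the limit in the second bound yields $\lambda_{\min}(\nabla^{2} f(\mathbf{x}^{\infty})) \ge 0$.

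The crucial step comes next: because $\mathbf{x}^{\infty}$ is first-order stationary, Assumption~\ref{ass: Strict saddle} forces $\lambda_{\min}(\nabla^{2} f(\mathbf{x}^{\infty}))$ to be either strictly positive or strictly negative; the bound just derived excludes the negative alternative, leaving $\nabla^{2} f(\mathbf{x}^{\infty}) \succ 0$, i.e.\ $\mathbf{x}^{\infty} \in \mathcal{X}_{f}^{*}$. To close the contradiction I would then use that $\mathbf{x} \mapsto \textup{dist}(\mathbf{x}, \mathcal{X}_{f}^{*})$ is $1$-Lipschitz: from $\mathbf{x}_{k} \to \mathbf{x}^{\infty} \in \mathcal{X}_{f}^{*}$ it follows that $\textup{dist}(\mathbf{x}_{k}, \mathcal{X}_{f}^{*}) \to 0$, contradicting $\textup{dist}(\mathbf{x}_{k}, \mathcal{X}_{f}^{*}) > \epsilon$.

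I expect the main obstacle to be not the limiting calculus, which is routine, but precisely this last substantive step where the strict-saddle condition upgrades the weak inequality $\lambda_{\min} \ge 0$ — all that compactness and continuity can deliver — to the strict inequality $\lambda_{\min} > 0$ that defines a genuine local minimizer. Without Assumption~\ref{ass: Strict saddle} the limit point could be a degenerate stationary point with $\lambda_{\min} = 0$ lying outside $\mathcal{X}_{f}^{*}$, and the conclusion would fail; the whole argument therefore hinges on invoking that dichotomy at the limit.
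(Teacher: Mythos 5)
Your proposal is correct and follows essentially the same route as the paper's proof: a contradiction argument that extracts a convergent subsequence via compactness of $\mathcal{X}$, passes the two defining inequalities of $\mathcal{X}_{f}^{\alpha}$ to the limit using continuity of $\nabla f$ and $\lambda_{\min}(\nabla^{2}f(\cdot))$, and then invokes Assumption~\ref{ass: Strict saddle} to upgrade $\lambda_{\min}\ge 0$ to $\lambda_{\min}>0$ at the limit point. The only differences are cosmetic: the paper routes the limit-passing through closedness of the nested sets $\mathcal{X}_{f}^{1/k}$, whereas you pass to the limit directly in the inequalities (and you also note the vacuous empty-intersection case, which the paper leaves implicit).
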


\begin{proof}
    Following the approach used in Lemma 3.8 \cite{daneshmand2020second} to establish a similar result in the absence of the second-order requirement in the definition of $\mathcal{X}_{f}^{\alpha}$, we prove the lemma by contradiction. Suppose
    \begin{align}
    \label{eq: Contradiction hypothesis}
        \inf_{\alpha>0} \{\sup\{\textup{dist}(\mathbf{x}, \mathcal{X}_{f}^{*}): \mathbf{x} \in {\mathcal{X}^{\alpha}_{f}} \cap \mathcal{X}\}\} = d > 0.
    \end{align}
    Then, there exists a sequence $\{\mathbf{x}^{k}\}$ with $\mathbf{x}^{k} \in \mathcal{X}_{f}^{1/k} \cap \mathcal{X}$ and $\textup{dist} (\mathbf{x}^{k}, \mathcal{X}_{f}^{*}) \ge d$ for all $k \in \mathbb{N}^+$. Since $\|\nabla f\|$ and $\lambda_{\min} (\nabla^{2} f)$ are continuous functions (see Assumptions \ref{ass: Smoothness gradient lipschitz}, \ref{ass: Smoothness hessian lipschitz}), $\mathcal{X}_{f}^{1/k}$ is closed. Thus, $\mathcal{X}_{f}^{1/k} \cap \mathcal{X}$ is compact. Since $\{\mathbf{x}^{k}\} \subset \mathcal{X}$, we can find a convergent sub-sequence $\{\mathbf{x}^{t_k}\}$ with limit point $\mathbf{x}^{\infty}$ satisfying $\textup{dist} (\mathbf{x}^{\infty}, \mathcal{X}_{f}^{*}) \ge d$. Since $(\mathcal{X}_{f}^{1/k} \cap \mathcal{X})\supset \mathcal{X}_{f}^{1/(k+1)}$, it follows that, $\mathbf{x}^{\infty} \in \mathcal{X}_{f}^{1/k} \cap \mathcal{X}$ for all $k \in \mathbb{N}^{+}$. This means $\|\nabla f(\mathbf{x}^{\infty})\| \le c_{1} / k $, $\lambda_{\min} (\nabla^{2} f(\mathbf{x}^{\infty})) \ge - c_{2} / k $ for all $k \in \mathbb{N}^+$, implying $\|\nabla f(\mathbf{x}^{\infty})\| = 0$, $\lambda_{\min} (\nabla^{2} f(\mathbf{x}^{\infty})) \ge 0$. By Assumption \ref{ass: Strict saddle}, we have $\lambda_{\min} (\nabla^{2} f(\mathbf{x}^{\infty})) > 0$. Hence $\textup{dist}(\mathbf{x}^{\infty}, \mathcal{X}_{f}^{*}) = 0$, which contradicts the initial hypothesis \eqref{eq: Contradiction hypothesis}.
\end{proof}

By combining Lemmas~\ref{lem: consensual bound on xQ*} through \ref{lem: arbitrarily close}, the following intermediate theorem can be established. It is used to prove Theorem~\ref{the: Second order guarantee} in the next section.

\begin{theoremii}
    \label{the: Local minimizers for Q}
      Let Assumptions~\ref{ass: Strict saddle}, \ref{ass: Smoothness gradient lipschitz}, \ref{ass: Smoothness hessian lipschitz}, \ref{ass: Coercivity}, \ref{ass: Network} hold. Given $\Delta_{1} > 0$, there exists threshold $\bar{\alpha}(\Delta_{1}) > 0$ such that, if $0 < \alpha \le \bar{\alpha}(\Delta_{1})$, and $\hat{\mathbf{x}}^{*}$ is a local minimizer of $Q_{\alpha}$, then
    \begin{align*}
        \textup{dist} (\hat{\mathbf{x}}_{i}^{*}, \mathcal{X}_{f}^{*}) \le \Delta_{1}
    \end{align*}
    for each $i\in\mathcal{V}$.
\end{theoremii}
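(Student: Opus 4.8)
The plan is to bound $\textup{dist}(\hat{\mathbf{x}}_i^*,\mathcal{X}_f^*)$ by splitting it, via the triangle inequality, into a \emph{consensus error} and a \emph{stationarity error}: for every $i\in\mathcal{V}$,
\[
\textup{dist}(\hat{\mathbf{x}}_i^*,\mathcal{X}_f^*) \le \|\hat{\mathbf{x}}_i^*-\bar{\mathbf{x}}^*\| + \textup{dist}(\bar{\mathbf{x}}^*,\mathcal{X}_f^*).
\]
Lemma~\ref{lem: consensual bound on xQ*} already controls the first summand, giving $\|\hat{\mathbf{x}}_i^*-\bar{\mathbf{x}}^*\|\le \alpha\,\|\nabla F(\hat{\mathbf{x}}^*)\|/(1-\lambda_2)$, which is $\mathcal{O}(\alpha)$ as soon as $\|\nabla F(\hat{\mathbf{x}}^*)\|$ is bounded. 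For the second summand, Lemmas~\ref{lem: first order guarantee for xhat*} and~\ref{lem: second order guarantee for xhat*} show precisely that $\|\nabla f(\bar{\mathbf{x}}^*)\|\le \alpha c_1$ and $\lambda_{\min}(\nabla^2 f(\bar{\mathbf{x}}^*))\ge -\alpha c_2$, i.e.\ $\bar{\mathbf{x}}^*\in\mathcal{X}_f^{\alpha}$, whereupon Lemma~\ref{lem: arbitrarily close} forces $\textup{dist}(\bar{\mathbf{x}}^*,\mathcal{X}_f^*)\to 0$ as $\alpha\downarrow 0$. The claim then follows by choosing $\bar{\alpha}(\Delta_1)$ small enough that each of the two errors is at most $\Delta_1/2$.

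The step requiring care is that both the constants $c_1,c_2$ and the hypotheses of Lemma~\ref{lem: arbitrarily close} presuppose a \emph{fixed} compact set $\mathcal{X}$ and a bound on $\|\nabla F(\hat{\mathbf{x}}^*)\|$ that is \emph{uniform} over all local minimizers of $Q_\alpha$ and all $\alpha$ below the threshold. This is exactly where Assumption~\ref{ass: Coercivity} enters. First I would establish that there exist $\bar{\alpha}_0>0$ and a compact $\mathcal{X}\subset\mathbb{R}^n$ such that, for all $0<\alpha\le\bar{\alpha}_0$, every local minimizer $\hat{\mathbf{x}}^*$ of $Q_\alpha$ has all its blocks $\hat{\mathbf{x}}_i^*$ (and hence $\bar{\mathbf{x}}^*$) in $\mathcal{X}$. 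Granting this localization, continuity of $\nabla F$ on the compact product $\mathcal{X}^m$ bounds $\|\nabla F(\hat{\mathbf{x}}^*)\|$ by a constant independent of $\alpha$, so $c_1,c_2$ are uniformly bounded and $\bar{\mathbf{x}}^*\in\mathcal{X}_f^{\alpha}\cap\mathcal{X}$; Lemma~\ref{lem: arbitrarily close} applied on this $\mathcal{X}$ then delivers a single rate for the stationarity error, valid for every local minimizer at once.

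Establishing this uniform localization is the main obstacle, and I would attempt it by contradiction. If no such $\mathcal{X}$ existed, there would be $\alpha_k\downarrow 0$ and local minimizers $\hat{\mathbf{x}}^{*,k}$ with $\|\hat{\mathbf{x}}^{*,k}\|\to\infty$. The first-order condition $\nabla F(\hat{\mathbf{x}}^{*,k})=-\alpha_k^{-1}(\mathbf{I}_{mn}-\hat{\mathbf{W}})\hat{\mathbf{x}}^{*,k}$ annihilates the consensus direction, forcing $\sum_i \nabla f_i(\hat{\mathbf{x}}_i^{*,k})=0$, while the disagreement part stays $\mathcal{O}(\alpha_k\|\nabla F(\hat{\mathbf{x}}^{*,k})\|)$ by Lemma~\ref{lem: consensual bound on xQ*}; the averaged iterate $\bar{\mathbf{x}}^{*,k}$ is therefore an increasingly accurate approximate stationary point of $f$ escaping to infinity. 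Coercivity of each $f_i$ (hence of $F$), combined with the Lipschitz-gradient and Lipschitz-Hessian bounds and the strict-saddle dichotomy of Assumption~\ref{ass: Strict saddle}, should exclude such a sequence of genuine local minimizers: intuitively, at an escaping approximate stationary point the Hessian of $Q_\alpha$ restricted to the consensus direction reduces to that of $f$, which cannot remain positive semidefinite there, contradicting local minimality of $\hat{\mathbf{x}}^{*,k}$. Once the localization is secured, the remainder is the routine combination above --- pick $\bar{\alpha}(\Delta_1)\le\bar{\alpha}_0$ so that the consensus error is at most $\Delta_1/2$ and, via Lemma~\ref{lem: arbitrarily close} on $\mathcal{X}$, the stationarity error is at most $\Delta_1/2$ --- and summing the two bounds yields the stated estimate for each $i\in\mathcal{V}$.
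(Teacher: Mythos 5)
Your skeleton is exactly the paper's proof: the same triangle-inequality split of $\textup{dist}(\hat{\mathbf{x}}_{i}^{*},\mathcal{X}_{f}^{*})$ into a consensus error and a stationarity error, Lemma~\ref{lem: consensual bound on xQ*} for the former, Lemmas~\ref{lem: first order guarantee for xhat*} and~\ref{lem: second order guarantee for xhat*} to place $\bar{\mathbf{x}}^{*}$ in $\mathcal{X}_{f}^{\alpha}$, Lemma~\ref{lem: arbitrarily close} for the latter, and $\bar{\alpha}(\Delta_{1})$ taken as the minimum of two thresholds each securing $\Delta_{1}/2$. Where you part ways with the paper is the step you call the main obstacle. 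The paper dispatches it in one sentence --- coercivity and properness of $F$ are asserted to make $\hat{\mathcal{X}}^{*}_{Q_{\alpha}}$ bounded, hence $\|\nabla F(\hat{\mathbf{x}}^{*})\|\le G$ uniformly --- and then invokes Lemma~\ref{lem: arbitrarily close} with the singleton $\mathcal{X}=\{\bar{\mathbf{x}}^{*}\}$. Your insistence on one \emph{fixed} compact set, uniform over all minimizers and all small $\alpha$, is actually the more defensible reading: the singleton $\{\bar{\mathbf{x}}^{*}\}$ depends on $\alpha$, so the threshold $\bar{\alpha}_{2}(\Delta_{1})$ returned by Lemma~\ref{lem: arbitrarily close} would otherwise depend on the very $\alpha$ being chosen; applying the lemma once on a fixed $\mathcal{X}$ removes this circularity, which the paper's wording hides.

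However, your proposed proof of the localization itself would not go through as written, and this is a genuine gap (one the paper shares). Coercivity, even together with Lipschitz gradient, Lipschitz Hessian and Assumption~\ref{ass: Strict saddle}, does \emph{not} exclude local minimizers escaping to infinity. Take $m=1$ (so $\mathbf{W}=1$ and $Q_{\alpha}=f$) and any smooth coercive $f$ whose derivative equals $1+1.1\sin(2\sqrt{x})$ for all large $x>0$: then $f(x)=x-1.1\sqrt{x}\cos(2\sqrt{x})+\mathcal{O}(1)\to\infty$, the second and third derivatives are bounded, every critical point is nondegenerate (so the strict-saddle dichotomy holds), and yet nondegenerate local minimizers occur at arbitrarily large $x$. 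The same phenomenon persists for $m\ge 2$ (e.g.\ with $f_{i}=f/m$, every consensual copy $\bm{1}_{m}\otimes\mathbf{x}^{*}$ of a nondegenerate minimizer $\mathbf{x}^{*}$ of $f$ is a local minimizer of $Q_{\alpha}$ for small $\alpha$). So your concluding claim that the Hessian ``cannot remain positive semidefinite there'' is false in this generality, and your contradiction cannot be completed from the stated assumptions alone; what is really needed --- and what the paper also asserts rather than proves --- is a uniform bound on $\|\nabla F(\hat{\mathbf{x}}^{*})\|$ over the relevant minimizers, e.g.\ by restricting attention to minimizers in a fixed sublevel set of $Q_{\alpha}$ (which is how the result is effectively used inside Theorem~\ref{the: Second order guarantee}, whose third hypothesis bounds $\{Q_{\alpha}(\hat{\mathbf{x}}^{k})\}$). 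Your write-up has the virtue of exposing this weak point, but it does not close it.
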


\begin{proof}
    Given $\alpha > 0$, by the triangle inequality, $\textup{dist} (\hat{\mathbf{x}}_{i}^{*}, \mathcal{X}_{f}^{*}) \le \|\hat{\mathbf{x}}_{i}^{*} - \Bar{\mathbf{x}}^{*}\| + \textup{dist} (\Bar{\mathbf{x}}^{*}, \mathcal{X}_{f}^{*})$, where $\Bar{\mathbf{x}}^{*} = \frac{1}{m}(\bm{1}_{m} \otimes \mathbf{I}_{n})^{T} \hat{\mathbf{x}}^{*}$. By coercivity and properness of each $f_i$ (see Assumption \ref{ass: Coercivity}), $F$ is coercive and proper. Therefore, $\hat{\mathcal{X}}^{*}_{Q_{\alpha}}$ is bounded, and there exists an upper bound $G > 0$ such that for all $\hat{\mathbf{x}}^{*} \in \hat{\mathcal{X}}^{*}_{Q_{\alpha}}$, $\|\nabla F(\hat{\mathbf{x}}^{*})\| \le G$. By Lemma~\ref{lem: consensual bound on xQ*}, if 
    \begin{align*}
        0< \alpha \le \bar{\alpha}_{1}(\Delta_{1}) := \frac{\Delta_{1}(1-\lambda_{2})}{2G}
    \end{align*}
    and $\hat{\mathbf{x}}^{*} \in \hat{\mathcal{X}}^{*}_{Q_{\alpha}}$ defined in \eqref{eq: minimizer set}, then $\|\hat{\mathbf{x}}_{i}^{*} - \Bar{\mathbf{x}}^{*}\| \le \Delta_{1}/2$ holds for each $i \in \mathcal{V}$. Now, note that $\bar{\mathbf{x}}^{*}\in\mathcal{X}_{f}^{\alpha}$ in view of Lemmas~\ref{lem: first order guarantee for xhat*} and~\ref{lem: second order guarantee for xhat*}, with $\mathcal{X}_{f}^{\alpha}$ as defined in Lemma~\ref{lem: arbitrarily close}. As such, by application of Lemma~\ref{lem: arbitrarily close} with $\mathcal{X}=\{\bar{\mathbf{x}}^{*}\}$, there exists $\bar{\alpha}_{2}(\Delta_{1})>0$ such that if $0 < \alpha \le \bar{\alpha}_{2}(\Delta_{1})$, then $\textup{dist} (\Bar{\mathbf{x}}^{*}, \mathcal{X}_{f}^{*}) \le \Delta_{1}/2$ holds. Therefore, if 
    \begin{align*}
        0 < \alpha \le \bar{\alpha}(\Delta_{1}) := \min\{\bar{\alpha}_{1}(\Delta_{1}),\bar{\alpha}_{2}(\Delta_{1})\},
    \end{align*}
    then $\textup{dist} (\hat{\mathbf{x}}_{i}^{*}, \mathcal{X}_{f}^{*}) \le \Delta_{1}$ as claimed.
\end{proof}


\section{Method and Main Results}
\label{sec: Method and Main Results}
In this section, a \textbf{N}oisy \textbf{D}istributed \textbf{G}radient \textbf{D}escent algorithm (see Algorithm \ref{alg: Noisy dgd}), a variant of the fixed step-size \textbf{DGD} algorithm, is formulated. The main analysis result, also formulated in this section, establishes the second-order properties of the \textbf{NDGD} algorithm. The key idea is to add random noise to the distributed gradient descent directions at each iteration. The required properties of the noise $\xi_{i}^{k}$ in Algorithm \ref{alg: Noisy dgd} is presented in Theorem \ref{the: Second order guarantee}. 


\begin{algorithm}
    \begin{algorithmic}
    \State \textbf{Initialization};
        \For{$k = 0,1,\cdots$}
            \For{$i = 0,1,\cdots,m$}
                \State Sample i.i.d $\xi_{i}^{k}$;
                \State $\hat{\mathbf{x}}_{i}^{k+1} = \sum_{j=1}^m \mathbf{W}_{ij} \hat{\mathbf{x}}_{j}^{k} - \alpha (\nabla f_i(\hat{\mathbf{x}}_{i}^{k}) + \xi_{i}^{k})$;
            \EndFor
        \EndFor
    \end{algorithmic}
\caption{\textbf{N}oisy \textbf{D}istributed \textbf{G}radient \textbf{D}escent (\textbf{NDGD})}
\label{alg: Noisy dgd}
\end{algorithm}

Recall that $\mathcal{X}^{*}_f$ and $\hat{\mathcal{X}}^{*}_{Q_{\alpha}}$ denote the set of local minimizers of $f$ and $Q_{\alpha}$ respectively, as per \eqref{eq: minimizer set}. Given $\epsilon > 0$, $\gamma > 0$, $\mu > 0$, $\delta > 0$, and $\alpha > 0$, define
\begin{align}
\label{eq: regularity set of f}
    \begin{aligned}
        \mathcal{L}_{\alpha,\epsilon}^{1} : = \{\hat{\mathbf{x}}:&~\|\nabla F(\hat{\mathbf{x}}) + \alpha^{-1}(\mathbf{I}_{mn}-\hat{\mathbf{W}}) \hat{\mathbf{x}}\| \ge \epsilon\},\\
        \mathcal{L}_{\alpha,\gamma}^{2} : = \{\hat{\mathbf{x}}:&~\lambda_{\min}(\nabla^2 F(\hat{\mathbf{x}})) \le -\gamma - \alpha^{-1}\},\\
        \mathcal{L}_{\alpha,\mu,\delta}^{3} : = \{\hat{\mathbf{x}}:&~\lambda_{\min}(\nabla^2 F(\hat{\mathbf{x}})) \ge \mu,~\textup{dist} (\hat{\mathbf{x}}, \hat{\mathcal{X}}^{\prime}) \le \delta\},
    \end{aligned}
\end{align}
where $\hat{\mathcal{X}}^{\prime} := \{\hat{\mathbf{x}} : \|\nabla F(\hat{\mathbf{x}}) + \alpha^{-1}(\mathbf{I}_{mn}-\hat{\mathbf{W}}) \hat{\mathbf{x}}\| = 0,~\lambda_{\min}(\nabla^2 F(\hat{\mathbf{x}})) > 0\}$. The next theorem, which establishes second-order properties of the \textbf{NDGD} algorithm, is the main result of the paper; a proof is given in Section~\ref{sec: Proof of Main Theorem}. Note that this result focuses on the dependency on the given step-size $\alpha$ and confidence parameter $\zeta$, hiding the factors that have polynomial dependence on all other parameters (including $\Delta_1$, $\epsilon$, $\gamma$, $\mu$, $\delta$ and $\sigma$ ).

\begin{theoremii}
\label{the: Second order guarantee}
    Suppose Assumptions~\ref{ass: Strict saddle}, \ref{ass: Smoothness gradient lipschitz}, \ref{ass: Smoothness hessian lipschitz}, \ref{ass: Coercivity}, \ref{ass: Network} hold. Given $\Delta_{1} > 0$ and $0 < \zeta < 1$, also suppose the following:
    \begin{enumerate}
    \item \label{ite: supposition 1} There exist $\epsilon>0$, $\gamma \in (0,L_{F}^{g}]$, $\mu \in (0,L_{F}^{g}]$, $\delta>0$ and $\alpha \in (0,\hat{\alpha}(\Delta_{1},\zeta)]$ such that 
    \begin{align*}
        \mathcal{L}_{\alpha,\epsilon}^{1} \cup \mathcal{L}_{\alpha,\gamma}^{2} \cup \mathcal{L}_{\alpha,\mu,\delta}^{3} = (\mathbb{R}^{n})^{m},
    \end{align*}
    where $\hat{\alpha}(\Delta_{1},\zeta) :=$
    \begin{align*}
        \min\{\bar{\alpha}(\Delta_{1}), \frac{\sqrt{2}-1}{L_{F}^{g}}, \frac{\lambda_{\min}(\mathbf{W})}{L_{F}^{g} \cdot \max\{1,\log(\zeta^{-1})\}}\}>0
    \end{align*}
    with $\bar{\alpha}(\Delta_1)$ as per
    Theorem \ref{the: Local minimizers for Q};
    \item \label{ite: supposition 2} the random perturbation $\xi_i^k$ at step $k>0$ is i.i.d. and zero mean with variance
    \begin{align*}
        \sigma^2 \leq \sigma_{\max}^2(\epsilon) := \frac{\lambda_{\min}(\mathbf{W})\epsilon^2}{mn};
    \end{align*}
    \item the generated sequence $\{Q_{\alpha}(\hat{\mathbf{x}}^k)\}$ is bounded.
    \end{enumerate}
    Then, with probability at least $1-\zeta$, after $K(\alpha,\zeta)=\mathcal{O}(\alpha^{-2}\log\zeta^{-1})$ iterations, Algorithm \ref{alg: Noisy dgd} reaches a point $\hat{\mathbf{x}}^{K(\alpha,\zeta)} \in (\mathbb{R}^{n})^{m}$ that is $\Delta_{2}(\alpha,\zeta)$-close to $\mathcal{X}_{Q_{\alpha}}^{*}$, where $\Delta_{2}(\alpha,\zeta) = \mathcal{O}(\sqrt{\alpha\log(\alpha^{-1}\zeta^{-1})})$. Moreover, $\hat{\mathbf{x}}^{*}=\inf_{\hat{\mathbf{x}} \in \mathcal{X}_{Q_{\alpha}}^{*}}\|\hat{\mathbf{x}}_{i}^{K(\alpha,\zeta)} -\hat{\mathbf{x}}\|$ is such that $\hat{\mathbf{x}}_{i}^{*}$ is $\Delta_{1}$-close to $\mathcal{X}_{f}^{*}$, whereby for all $i \in \mathcal{V}$,
    \begin{align*}
        \mathrm{dist}(\hat{\mathbf{x}}_{i}^{K(\alpha,\zeta)},\mathcal{X}_{f}^{*}) \leq \Delta_{1} + \Delta_{2}(\alpha,\zeta).
    \end{align*}
    
\end{theoremii}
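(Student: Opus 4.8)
The plan is to exploit the equivalence, established in \eqref{eq: GD}, between the \textbf{NDGD} update and \emph{noisy gradient descent} applied to the auxiliary function $Q_{\alpha}$: stacking $\xi^{k}=[(\xi_{1}^{k})^{T},\ldots,(\xi_{m}^{k})^{T}]^{T}$, Algorithm~\ref{alg: Noisy dgd} reads $\hat{\mathbf{x}}^{k+1}=\hat{\mathbf{x}}^{k}-\alpha(\nabla Q_{\alpha}(\hat{\mathbf{x}}^{k})+\xi^{k})$, since $\nabla Q_{\alpha}(\hat{\mathbf{x}})=\nabla F(\hat{\mathbf{x}})+\alpha^{-1}(\mathbf{I}_{mn}-\hat{\mathbf{W}})\hat{\mathbf{x}}$. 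The whole argument is then a convergence analysis for this perturbed descent on $Q_{\alpha}$, after which the bridge back to $\mathcal{X}_{f}^{*}$ is supplied by Theorem~\ref{the: Local minimizers for Q}.

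First I would reinterpret the three covering sets of supposition~\ref{ite: supposition 1} as a trichotomy for $Q_{\alpha}$ itself. The set $\mathcal{L}_{\alpha,\epsilon}^{1}$ is exactly the large-gradient region $\{\|\nabla Q_{\alpha}(\hat{\mathbf{x}})\|\ge\epsilon\}$. On $\mathcal{L}_{\alpha,\gamma}^{2}$, testing the eigenvector of $\nabla^{2}F$ associated with its smallest eigenvalue against $\nabla^{2}Q_{\alpha}=\nabla^{2}F+\alpha^{-1}(\mathbf{I}_{mn}-\hat{\mathbf{W}})$ and using $\mathbf{I}_{mn}-\hat{\mathbf{W}}\preceq(1-\lambda_{\min}(\mathbf{W}))\mathbf{I}_{mn}$ yields $\lambda_{\min}(\nabla^{2}Q_{\alpha}(\hat{\mathbf{x}}))\le-\gamma-\alpha^{-1}\lambda_{\min}(\mathbf{W})<-\gamma$, i.e. a uniform negative-curvature region. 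Finally, since $\mathbf{I}_{mn}-\hat{\mathbf{W}}\succeq0$ gives $\nabla^{2}Q_{\alpha}\succeq\nabla^{2}F$, the set $\hat{\mathcal{X}}^{\prime}$ embeds in $\hat{\mathcal{X}}_{Q_{\alpha}}^{*}$, so $\mathcal{L}_{\alpha,\mu,\delta}^{3}$ is a radius-$\delta$ neighborhood of local minimizers of $Q_{\alpha}$ on which $Q_{\alpha}$ is $\mu$-strongly convex. By supposition these regions exhaust $(\mathbb{R}^{n})^{m}$.

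Next I would run the escape-and-descend scheme for \emph{persistent-noise} (online) stochastic gradient, in the style of \cite{ge2015escaping,jin2017escape}, but with the smoothness constants $L_{Q_{\alpha}}^{g},L_{Q_{\alpha}}^{H}$ of $Q_{\alpha}$ recorded earlier. The constraint $\alpha\le(\sqrt{2}-1)/L_{F}^{g}$ keeps $\alpha L_{Q_{\alpha}}^{g}$ bounded away from $2$, so the descent lemma applies; combined with the variance cap $\sigma^{2}\le\lambda_{\min}(\mathbf{W})\epsilon^{2}/(mn)$ of supposition~\ref{ite: supposition 2}, which controls the persistent $\mathcal{O}(\alpha^{2}\sigma^{2})$ noise floor, this forces a strict expected decrease of $Q_{\alpha}$ on $\mathcal{L}_{\alpha,\epsilon}^{1}$. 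On $\mathcal{L}_{\alpha,\gamma}^{2}$ a coupling/martingale argument over a window of length $\mathcal{O}(\alpha^{-1}\log\zeta^{-1})$ (the role of the third term in $\hat{\alpha}$, tying the escape failure probability to $\zeta$) shows that the injected noise aligns with the negative-curvature direction and yields a further decrease with probability $\ge1-\mathcal{O}(\zeta)$. Since $\{Q_{\alpha}(\hat{\mathbf{x}}^{k})\}$ is bounded below by coercivity (Assumption~\ref{ass: Coercivity}) and above by supposition~3, only finitely many such decrease events can occur; balancing the per-step decrease against the noise floor and taking a union bound over the phases yields entry into $\mathcal{L}_{\alpha,\mu,\delta}^{3}$ within $K(\alpha,\zeta)=\mathcal{O}(\alpha^{-2}\log\zeta^{-1})$ iterations with probability at least $1-\zeta$. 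Once inside, a Lyapunov estimate on $\|\hat{\mathbf{x}}^{k}-\hat{\mathbf{x}}^{*}\|^{2}$ using $\mu$-strong convexity gives a contraction with additive term of order $\alpha^{2}\sigma^{2}$; summing the resulting geometric recursion and applying a sub-Gaussian tail bound produces the steady-state radius $\Delta_{2}(\alpha,\zeta)=\mathcal{O}(\sqrt{\alpha\log(\alpha^{-1}\zeta^{-1})})$ about $\hat{\mathcal{X}}_{Q_{\alpha}}^{*}$.

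To finish, since $\alpha\le\bar{\alpha}(\Delta_{1})$, Theorem~\ref{the: Local minimizers for Q} guarantees that the nearest $\hat{\mathbf{x}}^{*}\in\hat{\mathcal{X}}_{Q_{\alpha}}^{*}$ has each block $\hat{\mathbf{x}}_{i}^{*}$ within $\Delta_{1}$ of $\mathcal{X}_{f}^{*}$; the triangle inequality $\textup{dist}(\hat{\mathbf{x}}_{i}^{K(\alpha,\zeta)},\mathcal{X}_{f}^{*})\le\|\hat{\mathbf{x}}_{i}^{K(\alpha,\zeta)}-\hat{\mathbf{x}}_{i}^{*}\|+\textup{dist}(\hat{\mathbf{x}}_{i}^{*},\mathcal{X}_{f}^{*})\le\Delta_{2}(\alpha,\zeta)+\Delta_{1}$ then closes the claim. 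I expect the main obstacle to be the negative-curvature phase: establishing the escape probability and the guaranteed decrease through the martingale concentration, and above all managing the cumulative $\zeta$-budget and the polynomial-in-everything-else constants across the a priori unbounded number of saddle encounters, so that the totals collapse to the advertised $\mathcal{O}(\alpha^{-2}\log\zeta^{-1})$ and $\mathcal{O}(\sqrt{\alpha\log(\alpha^{-1}\zeta^{-1})})$ rates.
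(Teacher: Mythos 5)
Your proposal takes essentially the same route as the paper: recast \textbf{NDGD} as noisy gradient descent on $Q_{\alpha}$, transfer the covering condition on $\mathcal{L}_{\alpha,\epsilon}^{1}\cup\mathcal{L}_{\alpha,\gamma}^{2}\cup\mathcal{L}_{\alpha,\mu,\delta}^{3}$ to the corresponding regions for $Q_{\alpha}$ via Weyl-type eigenvalue bounds, run the three-case descent/escape/capture analysis of \cite{ge2015escaping} (large gradient, negative curvature, locally strongly convex neighborhood), and close with Theorem~\ref{the: Local minimizers for Q} plus the triangle inequality. The only divergences are minor bookkeeping: the paper obtains high-probability entry into the capture region from an expected-decrease bound on a stopped process (probability $1/2$ after $\mathcal{O}(\alpha^{-2})$ steps, amplified $\log\zeta^{-1}$ times), and the $\log(\zeta^{-1})$ term in $\hat{\alpha}(\Delta_{1},\zeta)$ is actually consumed by the capture lemma (Lemma~\ref{lem: Stay close to minimizers}) rather than by the saddle-escape phase as you place it.
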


\begin{remark}
    Intuitively, condition \textup{\ref{ite: supposition 1})} in Theorem \ref{the: Second order guarantee} requires that for $Q_{\alpha}$, all points with a small norm of gradient, they either possess a sufficient descent direction or reside within a neighborhood of a local minimizer of $Q_{\alpha}$, where local strong convexity is present.
\end{remark}

\begin{remark}
    For condition \textup{\ref{ite: supposition 2})} in Theorem \ref{the: Second order guarantee}, one way to generate such i.i.d noise is to sample $\xi_i^{k}$ uniformly from an $n$-dimensional sphere with the radius $r$. This ensures $\mathbb{E}(\xi_i^k) = \bm{0}$, $\mathbb{E}(\xi_i^k (\xi_i^k)^T) = (r^2/n) \mathbf{I}_n$, and $\|\xi_i^k\| \le r$ for all $i \in \mathcal{V}$ and $k \in \mathbb{N}$. By choosing $r^2 \le n\sigma_{\max}^2(\epsilon)$, we have $\mathbb{E}(\xi^k (\xi^k)^T) = (r^2/n) \mathbf{I}_{mn} \preceq \sigma_{\max}^2(\epsilon) \mathbf{I}_{mn}$.
\end{remark}

Second-order guarantees of \textbf{DGD} have been studied in \cite{daneshmand2020second} and \cite{swenson2022distributed} based on the almost sure non-convergence to saddle points with random initialization. In this paper, we propose to use random perturbations to actively evade saddle points. The second-order guarantees of \textbf{NDGD} stated in Theorem \ref{the: Second order guarantee} do not require any additional initialization conditions. Second-order guarantees of the stochastic variant of \textbf{DGD} have been studied in \cite{vlaski2021distributed1} and \cite{vlaski2021distributed2}, although they only show the convergence to an approximate second-order stationary point. Here, an upper bound is given for the distance between the iterate at each agent and the set of local minimizers after a sufficient number of iterations.

\section{Proof of Theorem \ref{the: Second order guarantee}}
\label{sec: Proof of Main Theorem}
A proof of Theorem \ref{the: Second order guarantee} is provided in Section~\ref{sec: Proof of Main Theorem}. First, we consider three different cases to study the behavior of \textbf{NDGD} for three different cases, in line with the development of the related result in \cite{ge2015escaping} for centralized gradient descent. i) large in norm $\nabla Q_{\alpha}(\hat{\mathbf{x}}^{k})$ (see Lemma~\ref{lem: Decrease for large gradient}); ii) sufficiently negative $\lambda_{\min}(\nabla^{2} Q_{\alpha}(\hat{\mathbf{x}}^{k}))$ (see Lemma~\ref{lem: Decrease for strict saddle}); and iii) $\hat{\mathbf{x}}^{k}$ in a neighborhood of the local minimizers of $Q_{\alpha}$ with local strong convexity (see Lemma~\ref{lem: Stay close to minimizers}). Combining the outcome of this with Theorem \ref{the: Local minimizers for Q}, we then prove that with probability at least $1-\zeta$, after $K(\alpha,\zeta)$ iterations, \textbf{NDGD} yields a point of which the component at each agent is $\Delta_{1} + \Delta_{2}(\alpha,\zeta)$-close to some local minimizer of $f$.

\subsection{Behavior of \textbf{NDGD} for three different cases}
    The following lemmas rely on the proofs of Lemma 16 and Lemma 17 in \cite{ge2015escaping}. Given $\epsilon > 0$, $\gamma > 0$, $\mu > 0$, $\delta > 0$, and $\alpha > 0$, define
    \begin{align}
    \label{eq: regularity set of Q}
        \begin{aligned}
            \mathcal{I}_{\alpha,\epsilon}^{1} : = \{\hat{\mathbf{x}}:&~\|\nabla Q_{\alpha} (\hat{\mathbf{x}})\| \ge \epsilon\},\\
            \mathcal{I}_{\alpha,\gamma}^{2} : = \{\hat{\mathbf{x}}:&~\Lambda_{\alpha} (\hat{\mathbf{x}}) \le -\gamma\},\\
            \mathcal{I}_{\alpha,\mu,\delta}^{3} : = \{\hat{\mathbf{x}}:&~\Lambda_{\alpha} (\hat{\mathbf{x}}) \ge \mu,~\textup{dist} (\hat{\mathbf{x}}, \hat{\mathcal{X}}^{*}_{Q_{\alpha}}) \le \delta\},
        \end{aligned}
    \end{align}
    where $\Lambda_{\alpha} (\hat{\mathbf{x}}) = \lambda_{\min}(\nabla^{2} Q_{\alpha}(\hat{\mathbf{x}}))$. 

    We first analyze the behavior of the \textbf{NDGD} algorithm in the case that $\hat{\mathbf{x}}^{k} \in \mathcal{I}_{\alpha,\epsilon}^{1}$. Intuitively, when the norm of $\nabla Q_{\alpha}(\hat{\mathbf{x}})$ is large enough, the expectation of the function value decreases by a certain amount after one iteration. 
    \begin{lemmaiii}
    \label{lem: Decrease for large gradient}
        Let Assumption~\ref{ass: Smoothness gradient lipschitz} hold. Given $\epsilon > 0$, suppose the random perturbation $\xi^{k}_i$ in Algorithm~\ref{alg: Noisy dgd} is i.i.d. and zero mean with variance $\sigma^2 \le \sigma_{\max}^2(\epsilon) := (\lambda_{\min}(\mathbf{W})\epsilon^2)/(mn)$.
        Then given $0 < \alpha \le 1/L_{F}^{g}$,
        for any $\hat{\mathbf{x}}^{k}$ such that $\|\nabla Q_{\alpha} (\hat{\mathbf{x}}^{k})\| \ge \epsilon$, after one iteration,
        \begin{align*}
            \mathbb{E} [Q_{\alpha} (\hat{\mathbf{x}}^{k+1}) ~|~ \hat{\mathbf{x}}^{k}] - Q_{\alpha} (\hat{\mathbf{x}}^{k}) \le -l_{1}(\alpha),
        \end{align*}
        where $l_{1}(\alpha) = \Omega(\alpha)$.
    \end{lemmaiii}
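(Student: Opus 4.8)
The plan is to exploit the observation made immediately after \eqref{eq: GD}: the \textbf{NDGD} update, written in matrix/vector form, is a \emph{perturbed} gradient step on the auxiliary function $Q_{\alpha}$, namely
\begin{align*}
    \hat{\mathbf{x}}^{k+1} = \hat{\mathbf{x}}^{k} - \alpha\bigl(\nabla Q_{\alpha}(\hat{\mathbf{x}}^{k}) + \xi^{k}\bigr),
\end{align*}
where $\xi^{k} := [(\xi_{1}^{k})^{T},\dots,(\xi_{m}^{k})^{T}]^{T}$. I would then invoke the descent lemma (quadratic upper bound) for the $L_{Q_{\alpha}}^{g}$-smooth function $Q_{\alpha}$ at $\hat{\mathbf{x}}^{k+1}$ relative to $\hat{\mathbf{x}}^{k}$, which gives
\begin{align*}
    Q_{\alpha}(\hat{\mathbf{x}}^{k+1}) - Q_{\alpha}(\hat{\mathbf{x}}^{k})
    &\le -\alpha\langle \nabla Q_{\alpha}(\hat{\mathbf{x}}^{k}), \nabla Q_{\alpha}(\hat{\mathbf{x}}^{k}) + \xi^{k}\rangle \\
    &\quad + \tfrac{1}{2}L_{Q_{\alpha}}^{g}\alpha^{2}\,\|\nabla Q_{\alpha}(\hat{\mathbf{x}}^{k}) + \xi^{k}\|^{2}.
\end{align*}

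Next I would take the conditional expectation $\mathbb{E}[\,\cdot\,|\,\hat{\mathbf{x}}^{k}]$. Since $\xi^{k}$ is zero mean and independent of $\hat{\mathbf{x}}^{k}$, every cross term involving $\xi^{k}$ vanishes, while the variance hypothesis yields $\mathbb{E}[\|\xi^{k}\|^{2}\,|\,\hat{\mathbf{x}}^{k}] \le mn\sigma^{2} \le mn\,\sigma_{\max}^{2}(\epsilon) = \lambda_{\min}(\mathbf{W})\epsilon^{2}$. This produces
\begin{align*}
    \mathbb{E}[Q_{\alpha}(\hat{\mathbf{x}}^{k+1})\,|\,\hat{\mathbf{x}}^{k}] - Q_{\alpha}(\hat{\mathbf{x}}^{k})
    &\le -\alpha\bigl(1 - \tfrac{1}{2}L_{Q_{\alpha}}^{g}\alpha\bigr)\|\nabla Q_{\alpha}(\hat{\mathbf{x}}^{k})\|^{2} \\
    &\quad + \tfrac{1}{2}L_{Q_{\alpha}}^{g}\alpha^{2}\,\lambda_{\min}(\mathbf{W})\epsilon^{2},
\end{align*}
into which I would substitute $\|\nabla Q_{\alpha}(\hat{\mathbf{x}}^{k})\| \ge \epsilon$ in the first (negative) term and then factor out $\alpha\epsilon^{2}$.

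The crux is to show the resulting coefficient of $\alpha\epsilon^{2}$ is a strictly negative constant, giving $-l_{1}(\alpha)$ with $l_{1}(\alpha)=\Omega(\alpha)$. The key algebraic fact is that, although $L_{Q_{\alpha}}^{g} = L_{F}^{g} + \alpha^{-1}(1-\lambda_{\min}(\mathbf{W}))$ diverges as $\alpha \downarrow 0$, the products $L_{Q_{\alpha}}^{g}\alpha = L_{F}^{g}\alpha + (1-\lambda_{\min}(\mathbf{W}))$ and $L_{Q_{\alpha}}^{g}\alpha^{2}$ stay bounded and $\Theta(\alpha)$, respectively, so the smoothness constant cannot be treated as fixed but its contributions remain $\mathcal{O}(\alpha)$. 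Using the step-size bound to control $L_{F}^{g}\alpha$ keeps the descent coefficient $1 - \tfrac{1}{2}L_{Q_{\alpha}}^{g}\alpha$ bounded below by a positive constant, and the factor $\lambda_{\min}(\mathbf{W})$ carried by $\sigma_{\max}^{2}(\epsilon)$ is exactly what makes the residual noise-inflation term smaller than the descent. I expect the main obstacle to be precisely this final balancing: in the worst case $\|\nabla Q_{\alpha}(\hat{\mathbf{x}}^{k})\| = \epsilon$ at a boundary step-size, the descent and noise terms are comparable, so the numerical constant in the step-size threshold must be chosen sharply enough (bounding $L_{F}^{g}\alpha$ away from $1$, consistent with the factor $\sqrt{2}-1$ appearing in $\hat{\alpha}(\Delta_{1},\zeta)$) to force strict negativity uniformly over $\{\hat{\mathbf{x}}: \|\nabla Q_{\alpha}(\hat{\mathbf{x}})\| \ge \epsilon\}$ rather than mere non-positivity.
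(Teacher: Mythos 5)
Your proposal follows essentially the same route as the paper's proof: rewrite the \textbf{NDGD} update as a perturbed gradient step on $Q_{\alpha}$, apply the quadratic upper bound with constant $L_{Q_{\alpha}}^{g}$ (the paper does this via Taylor's theorem with integral remainder, which is the same estimate), take conditional expectations so that the cross terms in $\xi^{k}$ vanish and the noise contributes $\mathbb{E}[\|\xi^{k}\|^{2} ~|~ \hat{\mathbf{x}}^{k}] \le mn\sigma^{2}$, and then exploit $L_{Q_{\alpha}}^{g}\alpha = L_{F}^{g}\alpha + 1-\lambda_{\min}(\mathbf{W})$ so that every coefficient stays $\Theta(\alpha)$. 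Writing $w := \lambda_{\min}(\mathbf{W})$, both you and the paper arrive (for $0<\alpha \le 1/L_{F}^{g}$) at
\begin{align*}
\mathbb{E}[Q_{\alpha}(\hat{\mathbf{x}}^{k+1}) ~|~ \hat{\mathbf{x}}^{k}] - Q_{\alpha}(\hat{\mathbf{x}}^{k}) \le -\tfrac{w}{2}\,\alpha\|\nabla Q_{\alpha}(\hat{\mathbf{x}}^{k})\|^{2} + \tfrac{2-w}{2}\,\alpha\, mn\sigma^{2}.
\end{align*}

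The final balancing you single out as the main obstacle is genuinely the crux, and you are right to be suspicious: under the lemma's nominal constants it does not go through. In the worst case $\|\nabla Q_{\alpha}(\hat{\mathbf{x}}^{k})\| = \epsilon$, $mn\sigma^{2} = w\epsilon^{2}$, $L_{F}^{g}\alpha = 1$, the right-hand side above equals $\tfrac{1}{2}w(1-w)\alpha\epsilon^{2} \ge 0$, so only non-positivity, not an $\Omega(\alpha)$ decrease, follows. Note that the paper's own proof does not clear this hurdle either: its last inequality, which bounds the display by $-\tfrac{w}{2}\alpha mn\sigma^{2}$, is equivalent to requiring $\|\nabla Q_{\alpha}(\hat{\mathbf{x}}^{k})\|^{2} \ge 2mn\sigma^{2}/w$, whereas the hypotheses $\|\nabla Q_{\alpha}(\hat{\mathbf{x}}^{k})\| \ge \epsilon$ and $\sigma^{2} \le \sigma_{\max}^{2}(\epsilon)$ only supply $\|\nabla Q_{\alpha}(\hat{\mathbf{x}}^{k})\|^{2} \ge mn\sigma^{2}/w$ --- a factor of $2$ short. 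Either of the remedies you gesture at closes the gap: halve the variance cap to $\sigma^{2} \le w\epsilon^{2}/(2mn)$, which yields the decrease $-\tfrac{w^{2}}{4}\alpha\epsilon^{2}$ with $\alpha \le 1/L_{F}^{g}$; or tighten the step size to $\alpha \le (\sqrt{2}-1)/L_{F}^{g}$, exactly as Theorem 3.1 imposes through $\hat{\alpha}(\Delta_{1},\zeta)$, in which case the coefficient multiplying $\alpha\epsilon^{2}$ is at most $-\tfrac{2-\sqrt{2}}{2} + \tfrac{(\sqrt{2}-1)^{2}}{8} < 0$ uniformly over $w \in (0,1]$. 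So your plan matches the paper's argument, but to become a complete (and correct) proof it must execute this last step with one of these sharpened constants rather than with $\alpha \le 1/L_{F}^{g}$ and $\sigma_{\max}^{2}(\epsilon)$ as stated.
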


    \begin{proof}
        Since $\mathbf{W}$ is symmetric, doubly stochastic and strictly diagonally dominant, it is positive definite by the Gershgorin disk theorem. Given $\sigma^{2} \le (\lambda_{\min}(\mathbf{W})\epsilon^2) / (mn)$, note that $\sqrt{mn\sigma^{2} / \lambda_{\min}(\mathbf{W})} < \epsilon$. Since $Q_{\alpha}$ has $L_{Q_{\alpha}}^{g}$-Lipschitz continuous gradient, using Taylor's theorem gives
        \begin{align*}
            \mathbb{E} [Q&_{\alpha} (\hat{\mathbf{x}}^{k+1}) ~|~ \hat{\mathbf{x}}^{k}] - Q_{\alpha} (\hat{\mathbf{x}}^{k}) \\
            =&~\mathbb{E} [\nabla Q_{\alpha} (\hat{\mathbf{x}}^{k})^{T} (\hat{\mathbf{x}}^{k+1} - \hat{\mathbf{x}}^{k}) + (\hat{\mathbf{x}}^{k+1} - \hat{\mathbf{x}}^{k})^{T}(\int^{1}_{0} (1-t)\\
            &~\quad\nabla^2 Q_{\alpha} (\hat{\mathbf{x}}^{k} + t(\hat{\mathbf{x}}^{k+1} - \hat{\mathbf{x}}^{k})) \textup{d}t) (\hat{\mathbf{x}}^{k+1} - \hat{\mathbf{x}}^{k}) ~|~ \hat{\mathbf{x}}^{k}]\\
            \le&~\nabla Q_{\alpha} (\hat{\mathbf{x}}^{k})^{T} \mathbb{E} [\hat{\mathbf{x}}^{k+1} - \hat{\mathbf{x}}^{k} ~|~ \hat{\mathbf{x}}^{k}]\\ 
            &~ \quad+\frac{L_{Q_{\alpha}}^{g}}{2}~\mathbb{E} [\|\hat{\mathbf{x}}^{k+1} - \hat{\mathbf{x}}^{k}\|^{2} ~|~ \hat{\mathbf{x}}^{k}]\\
            =&~\nabla Q_{\alpha} (\hat{\mathbf{x}}^{k})^{T} \mathbb{E} [-\alpha (\nabla Q_{\alpha} (\hat{\mathbf{x}}^{k}) + \xi^{k}) ~|~ \hat{\mathbf{x}}^{k}]\\
            &~ \quad+\frac{L_{Q_{\alpha}}^{g}}{2}~\mathbb{E} [\|-\alpha (\nabla Q_{\alpha} (\hat{\mathbf{x}}^{k}) + \xi^{k})\|^{2} ~|~ \hat{\mathbf{x}}^{k}]\\
            =&~(-\alpha + \frac{L_{Q_{\alpha}}^{g}}{2} \alpha^{2}) \|\nabla Q_{\alpha} (\hat{\mathbf{x}}^{k})\|^{2} + \frac{L_{Q_{\alpha}}^{g}}{2}\alpha^{2}mn\sigma^{2}\\
            =&~(-\frac{1+\lambda_{\min}(\mathbf{W})}{2} \alpha + \frac{L_{F}^{g}}{2} \alpha^{2}) \|\nabla Q_{\alpha} (\hat{\mathbf{x}}^{k})\|^{2}\\
            &~ \quad+(\frac{1-\lambda_{\min}(\mathbf{W})}{2} \alpha + \frac{L_{F}^{g}}{2} \alpha^{2})mn\sigma^{2}.
        \end{align*}
        As such, choosing $0 < \alpha \le 1 / L_{F}^{g}$ gives
        \begin{align*}
            \mathbb{E} [Q&_{\alpha} (\hat{\mathbf{x}}^{k+1}) ~|~ \hat{\mathbf{x}}^{k}] - Q_{\alpha} (\hat{\mathbf{x}}^{k})\\
            \le&-\frac{\lambda_{\min}(\mathbf{W})}{2} \alpha \|\nabla Q_{\alpha} (\hat{\mathbf{x}}^{k})\|^{2} + \frac{2 - \lambda_{\min}(\mathbf{W})}{2}\alpha mn\sigma^{2}\\
            \\[-3pt]
            \le&-\frac{\lambda_{\min}(\mathbf{W})}{2}\alpha mn\sigma^{2}
        \end{align*}
        as claimed.
    \end{proof}

    Next, we analyze the behavior of the \textbf{NDGD} algorithm in the case that $\hat{\mathbf{x}}^{k} \in \mathcal{I}_{\alpha,\gamma}^{2}$. Intuitively, for $\hat{\mathbf{x}}^{k}$ with small gradient and sufficiently negative $\lambda_{\min}(\nabla^{2} Q_{\alpha}(\hat{\mathbf{x}}))$, there exists upper bound $T_{\max}(\alpha)$ such that the expectation of the function value decreases by a certain amount after $T \le T_{\max}(\alpha)$ iterations. 
    \begin{lemmaiii}
    \label{lem: Decrease for strict saddle}
        Let Assumptions \ref{ass: Smoothness gradient lipschitz}, \ref{ass: Smoothness hessian lipschitz} hold. Let $\gamma \in (0,L_{F}^{g}]$. Given $\epsilon > 0$, suppose the random perturbation $\xi^{k}_i$ in Algorithm~\ref{alg: Noisy dgd} is i.i.d. and zero mean with variance $\sigma^2 \le \sigma_{\max}^2(\epsilon) := (\lambda_{\min}(\mathbf{W})\epsilon^2)/(mn)$.
        Further, given $0 < \alpha \le (\sqrt{2}-1)/L_{F}^{g}$,
        suppose that the generated sequence $\{Q_{\alpha}(\hat{\mathbf{x}}^{k})\}$ is bounded. Then, for any $\hat{\mathbf{x}}^{k}$ with $\|\nabla Q_{\alpha} (\hat{\mathbf{x}}^{k})\| < \epsilon$ and $\lambda_{\min}(\nabla^{2} Q_{\alpha}(\hat{\mathbf{x}})) \le -\gamma$, there exists a number of steps $T(\hat{\mathbf{x}}^{k}) > 0$ such that
        \begin{align*}
            \mathbb{E} [Q_{\alpha} (\hat{\mathbf{x}}^{k+T(\hat{\mathbf{x}}^{k})}) ~|~ \hat{\mathbf{x}}^{k}] - Q_{\alpha} (\hat{\mathbf{x}}^{k}) \le -l_{2}(\alpha),
        \end{align*}
        where $l_{2}(\alpha) = \Omega(\alpha)$. The number of steps $T(\hat{\mathbf{x}}^{k})$ has a fixed upper bound $T_{\max}(\alpha)$ that is independent of $\hat{\mathbf{x}}^{k}$, i.e., $T(\hat{\mathbf{x}}^{k}) \le T_{\max}(\alpha) = \mathcal{O}(\alpha^{-1})$ for all $\hat{\mathbf{x}}^{k}$.
    \end{lemmaiii}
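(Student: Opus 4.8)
The strategy is to transcribe the saddle-escape argument of \cite{ge2015escaping} (Lemmas 16 and 17) into the setting of the auxiliary function $Q_{\alpha}$, exploiting the equivalence \eqref{eq: GD} whereby one \textbf{NDGD} iteration is a single perturbed gradient-descent step $\hat{\mathbf{x}}^{k+1} = \hat{\mathbf{x}}^{k} - \alpha(\nabla Q_{\alpha}(\hat{\mathbf{x}}^{k}) + \xi^{k})$ on $Q_{\alpha}$. Write $H := \nabla^{2} Q_{\alpha}(\hat{\mathbf{x}}^{k})$, let $v$ be a unit eigenvector for $\lambda_{\min}(H) \le -\gamma$, and set $y^{t} := \hat{\mathbf{x}}^{k+t} - \hat{\mathbf{x}}^{k}$. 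The mechanism is that along $v$ the linearised map $\mathbf{I}_{mn} - \alpha H$ has expansion factor $1 - \alpha\lambda_{\min}(H) \ge 1 + \alpha\gamma > 1$, so the perturbations $\xi^{k+t}$ are amplified geometrically in the negative-curvature direction; once the displacement $v^{T}y^{t}$ has grown enough, the quadratic form $\tfrac{1}{2}(y^{t})^{T} H y^{t}$ becomes negative enough to certify a drop in $Q_{\alpha}$.

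First I would fix a ball $B(\hat{\mathbf{x}}^{k}, r)$ of radius $r = \Theta(\gamma / L_{Q_{\alpha}}^{H})$, so that by Assumption~\ref{ass: Smoothness hessian lipschitz} (recall $L_{Q_{\alpha}}^{H} = L_{F}^{H}$) one has $\|\nabla^{2} Q_{\alpha}(\hat{\mathbf{x}}) - H\| \le \gamma/2$ throughout the ball; the curvature along $v$ then stays at most $-\gamma/2$ and the quadratic Taylor model is accurate up to the cubic remainder $\tfrac{L_{Q_{\alpha}}^{H}}{6}\|y^{t}\|^{3}$. The step-size bound $\alpha \le (\sqrt{2}-1)/L_{F}^{g}$ is used to rule out spurious expansion in the remaining directions: since $\alpha L_{Q_{\alpha}}^{g} = \alpha L_{F}^{g} + 1 - \lambda_{\min}(\mathbf{W}) \le \sqrt{2} - \lambda_{\min}(\mathbf{W}) < 2$, every eigenvalue $\lambda \in (0, L_{Q_{\alpha}}^{g}]$ of $\nabla^{2} Q_{\alpha}$ gives $|1 - \alpha\lambda| < 1$, so only the negative-curvature direction expands under $\mathbf{I}_{mn} - \alpha H$.

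Next I would run the two-case analysis of \cite{ge2015escaping}. In the stay-in-ball case, using $v^{T}H = \lambda_{\min}(H)v^{T}$ and replacing $\nabla Q_{\alpha}(\hat{\mathbf{x}}^{k+t})$ by its linearisation $\nabla Q_{\alpha}(\hat{\mathbf{x}}^{k}) + H y^{t}$ (whose error is controlled by the Hessian-Lipschitz bound $\gamma/2$ in the ball), the projected iterate obeys a recursion with effective expansion factor at least $1 + \alpha\gamma/2$; taking second moments and using independence of the $\xi^{k+t}$, the injected variance $\alpha^{2}\mathbb{E}[(v^{T}\xi^{k+t})^{2}]$ is amplified geometrically, so $\mathbb{E}[(v^{T}y^{T})^{2}] = \Omega\!\big(\alpha\sigma^{2}\gamma^{-1}((1+\alpha\gamma/2)^{2T}-1)\big)$. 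Choosing $T = \mathcal{O}(\alpha^{-1})$ (the geometric factor need only reach a fixed constant, so no extra $\log\alpha^{-1}$ appears) makes this $\Theta(\alpha)$, whence $\|y^{T}\| = \Theta(\sqrt{\alpha}) \ll r$; substituting into the quadratic model and using $\|\nabla Q_{\alpha}(\hat{\mathbf{x}}^{k})\| < \epsilon$ together with $\sigma^{2} \le \sigma_{\max}^{2}(\epsilon)$ to dominate the linear and cubic terms yields $\mathbb{E}[Q_{\alpha}(\hat{\mathbf{x}}^{k+T}) ~|~ \hat{\mathbf{x}}^{k}] - Q_{\alpha}(\hat{\mathbf{x}}^{k}) \le -\Omega(\alpha)$. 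The (rare) leave-ball case is handled via boundedness of $\{Q_{\alpha}(\hat{\mathbf{x}}^{k})\}$, which by coercivity of $Q_{\alpha}$ confines the iterates to a compact set and hence bounds $\nabla Q_{\alpha}$ and $\nabla^{2}Q_{\alpha}$ uniformly: a displacement $\|y^{t}\| \ge r$ attained in so few steps forces a large move along the negative-curvature direction and thus a comparable decrease, rather than an increase that could cancel the stay-in-ball gain. These uniform bounds also furnish the $\hat{\mathbf{x}}^{k}$-independent horizon $T_{\max}(\alpha) = \mathcal{O}(\alpha^{-1})$.

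I expect the principal obstacle to be the noise bookkeeping over the $\mathcal{O}(\alpha^{-1})$-step horizon: each step injects an $\mathcal{O}(\alpha^{2})$ noise-induced increase into $\mathbb{E}[Q_{\alpha}]$, so over $T_{\max}$ steps the accumulated cost is $\mathcal{O}(\alpha)$ --- exactly the order of the curvature-driven decrease. The delicate part is thus to choose $r$, the threshold on $\mathbb{E}[(v^{T}y^{T})^{2}]$, and $T$ so that the negative-curvature gain strictly dominates the accumulated noise, the cubic remainder, and the bounded positive-curvature contributions, leaving a clean net bound $-l_{2}(\alpha)$ with $l_{2}(\alpha) = \Omega(\alpha)$, all while tracking the $\alpha$-dependence hidden in $L_{Q_{\alpha}}^{g} = L_{F}^{g} + \alpha^{-1}(1-\lambda_{\min}(\mathbf{W}))$ so that the constants in the $\Omega(\cdot)$ and $\mathcal{O}(\cdot)$ estimates remain independent of $\alpha$.
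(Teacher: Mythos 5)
Your route is, at bottom, the same as the paper's: the paper proves Lemma~\ref{lem: Decrease for strict saddle} by a two-line verification that the noise bound and step-size bound place one in the setting of Lemma 17 of \cite{ge2015escaping}, whose proof is then invoked for $Q_{\alpha}$; you attempt to transcribe that proof explicitly. Your reading of the step-size condition is correct and is exactly what the paper leaves implicit: $\alpha \le (\sqrt{2}-1)/L_{F}^{g}$ forces $\alpha L_{Q_{\alpha}}^{g} \le \sqrt{2}-\lambda_{\min}(\mathbf{W}) < 2$, so every positive-curvature direction of $\nabla^{2}Q_{\alpha}$ contracts under $\mathbf{I}_{mn}-\alpha\nabla^{2}Q_{\alpha}$ and only the negative-curvature direction expands.

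There is, however, a genuine gap in your noise bookkeeping, and it sits precisely where the distributed setting differs from the centralized one you are transcribing. You assert that each step injects an $\mathcal{O}(\alpha^{2})$ increase into $\mathbb{E}[Q_{\alpha}]$, so that the accumulated cost over $T_{\max}=\mathcal{O}(\alpha^{-1})$ steps is $\mathcal{O}(\alpha)$. That per-step estimate comes from the descent lemma with an $\alpha$-independent smoothness constant, which is the situation in \cite{ge2015escaping}; here, instead, $L_{Q_{\alpha}}^{g} = L_{F}^{g} + \alpha^{-1}(1-\lambda_{\min}(\mathbf{W}))$, so the per-step noise term is $\tfrac{L_{Q_{\alpha}}^{g}}{2}\alpha^{2}mn\sigma^{2} = \bigl(\tfrac{1-\lambda_{\min}(\mathbf{W})}{2}\alpha + \tfrac{L_{F}^{g}}{2}\alpha^{2}\bigr)mn\sigma^{2} = \Theta(\alpha)$, not $\mathcal{O}(\alpha^{2})$ --- this is visible in the paper's own proof of Lemma~\ref{lem: Decrease for large gradient}. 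With your "per-step cost times horizon" accounting, the accumulated cost over $T=\Theta(\alpha^{-1})$ steps is therefore $\Theta(1)$, which swamps the $\Omega(\alpha)$ negative-curvature gain, and the claimed net decrease $-l_{2}(\alpha)=-\Omega(\alpha)$ does not follow. The repair is a per-direction (quadratic-model) argument rather than a global smoothness bound: in an eigendirection of $\nabla^{2}Q_{\alpha}$ with eigenvalue $\lambda>0$, the iteration contracts by $|1-\alpha\lambda|$ with $\alpha\lambda \le \sqrt{2}-\lambda_{\min}(\mathbf{W})$, so the noise energy that direction contributes to $\mathbb{E}[Q_{\alpha}]$ saturates at $\tfrac{\lambda}{2}\cdot\tfrac{\alpha^{2}\sigma^{2}}{1-(1-\alpha\lambda)^{2}} = \tfrac{\alpha\sigma^{2}}{2(2-\alpha\lambda)} \le \tfrac{\alpha\sigma^{2}}{2(2-\sqrt{2})}$ instead of growing linearly in $T$; summing over the $mn$ directions restores an $\mathcal{O}(mn\,\alpha\sigma^{2})$ total cost, which the geometrically amplified gain along the escape direction can dominate by an $\alpha$-independent choice of the constant in $T_{\max}$. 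This refinement (implicit in the proof of \cite{ge2015escaping} once transported to $Q_{\alpha}$, and the reason the paper's citation is legitimate despite the $\alpha$-dependence of $L_{Q_{\alpha}}^{g}$) is exactly the ingredient your sketch is missing; ironically, it is the "$\alpha$-dependence hidden in $L_{Q_{\alpha}}^{g}$" that your final sentence warns about but your accounting does not respect.
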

    \begin{proof}
        Given $\sigma^{2} \le (\lambda_{\min}(\mathbf{W})\epsilon^2) / (mn)$, note that $\sqrt{mn\sigma^{2} / \lambda_{\min}(\mathbf{W})} < \epsilon$. Thus, choosing $0 < \alpha \le (\sqrt{2}-1) / L_{F}^{g} \le (2mn) / L_{F}^{g} \le (2mn) / \gamma$, the result holds as shown in the proof of Lemma 17 in \cite{ge2015escaping}.
    \end{proof}

    Finally, we analyze the behavior of the algorithm in the case that $\hat{\mathbf{x}}^{k} \in \mathcal{I}_{\alpha,\mu,\delta}^{3}$. Intuitively, when the iterate $\hat{\mathbf{x}}^k$ is close enough to a local minimizer, with high probability subsequent iterates do not leave the neighborhood. 
    \begin{lemmaiii}
    \label{lem: Stay close to minimizers}
        Let Assumptions \ref{ass: Smoothness gradient lipschitz} hold. Let $\mu \in (0,L_{F}^{g}]$. Given $\epsilon > 0$, suppose the random perturbation $\xi^{k}_i$ in Algorithm~\ref{alg: Noisy dgd} is i.i.d. and zero mean with variance $\sigma^2 \le \sigma_{\max}^2(\epsilon) := (\lambda_{\min}(\mathbf{W})\epsilon^2)/(mn)$.
        Further, given $\delta>0$, $0 < \alpha \le (\lambda_{\min}(\mathbf{W}))/(L_{F}^{g} \cdot \max\{1,\log(\zeta^{-1})\})$
        and local minimizer $\hat{\mathbf{x}}^{*} \in \mathcal{X}_{Q_{\alpha}}^{*}$, suppose $\lambda_{\min}(\nabla^{2} Q_{\alpha}(\hat{\mathbf{x}})) \ge \mu$ for all $\hat{\mathbf{x}}$ such that $\|\hat{\mathbf{x}}-\hat{\mathbf{x}}^{*}\|<\delta$. Then, there exists $\delta_{1}(\alpha)=\mathcal{O}(\sqrt{\alpha})\in[0,\delta)$ such that, for any $\hat{\mathbf{x}}^{k}$ with $\|\hat{\mathbf{x}}^{k}-\hat{\mathbf{x}}^{*}\|<\delta_{1}(\alpha)$, with probability at least $1-\zeta/2$,
        \begin{align*}
            \|\hat{\mathbf{x}}^{k+s} - \hat{\mathbf{x}}^{*}\| \le \Delta_{2}(\alpha,\zeta)
        \end{align*}
        for all $s \le K(\alpha,\zeta) = \mathcal{O}(\alpha^{-2}\log(\zeta^{-1}))$, where $\Delta_{2}(\alpha,\zeta) = \mathcal{O}(\sqrt{\alpha \log(\alpha^{-1} \zeta^{-1})}) < \delta$.
    \end{lemmaiii}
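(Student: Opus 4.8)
The plan is to use identity \eqref{eq: GD}, which recasts one \textbf{NDGD} step as a noisy gradient-descent step on $Q_{\alpha}$, namely $\hat{\mathbf{x}}^{k+1} = \hat{\mathbf{x}}^{k} - \alpha(\nabla Q_{\alpha}(\hat{\mathbf{x}}^{k}) + \xi^{k})$, where $\xi^{k}$ stacks the $\xi_{i}^{k}$ and satisfies $\mathbb{E}[\xi^{k}]=0$, $\mathbb{E}\|\xi^{k}\|^{2} \le mn\sigma^{2}$. Writing $\mathbf{y}^{k} := \hat{\mathbf{x}}^{k} - \hat{\mathbf{x}}^{*}$, using the first-order condition $\nabla Q_{\alpha}(\hat{\mathbf{x}}^{*}) = 0$ together with the integral mean-value form of the gradient, I would express the deviation update as $\mathbf{y}^{k+1} = (\mathbf{I}_{mn} - \alpha \mathbf{H}^{k})\mathbf{y}^{k} - \alpha\xi^{k}$, with $\mathbf{H}^{k} = \int_{0}^{1} \nabla^{2} Q_{\alpha}(\hat{\mathbf{x}}^{*} + t\mathbf{y}^{k})\,\textup{d}t$. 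The whole argument then reduces to controlling this linear-plus-noise recursion for as long as the iterate remains inside the $\delta$-ball on which local strong convexity is assumed.

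While $\|\mathbf{y}^{k}\| < \delta$, the local strong-convexity hypothesis gives $\mathbf{H}^{k} \succeq \mu \mathbf{I}_{mn}$, and the Lipschitz-gradient constant $L_{Q_{\alpha}}^{g}$ gives $\mathbf{H}^{k} \preceq L_{Q_{\alpha}}^{g}\mathbf{I}_{mn}$. The role of the step-size restriction $\alpha \le \lambda_{\min}(\mathbf{W})/(L_{F}^{g}\max\{1,\log\zeta^{-1}\})$ is precisely to ensure $\alpha L_{Q_{\alpha}}^{g} = \alpha L_{F}^{g} + (1 - \lambda_{\min}(\mathbf{W})) \le 1$, so that $\mathbf{I}_{mn} - \alpha\mathbf{H}^{k} \succeq 0$ and hence $\|\mathbf{I}_{mn}-\alpha\mathbf{H}^{k}\| \le 1 - \alpha\mu =: \beta \in (0,1)$. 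Taking conditional expectations and using that $\xi^{k}$ is zero-mean and independent of $\hat{\mathbf{x}}^{k}$ yields $\mathbb{E}[\|\mathbf{y}^{k+1}\|^{2} \mid \hat{\mathbf{x}}^{k}] \le \beta^{2}\|\mathbf{y}^{k}\|^{2} + \alpha^{2} mn\sigma^{2}$. The fixed point of this scalar recursion is $\alpha mn\sigma^{2}/(\mu(2-\alpha\mu)) = \mathcal{O}(\alpha)$, which identifies the starting radius $\delta_{1}(\alpha) = \mathcal{O}(\sqrt{\alpha})$ and shows the deviation contracts in mean to an $\mathcal{O}(\sqrt{\alpha})$ neighborhood.

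To upgrade this mean bound to the claimed uniform-in-time high-probability statement, I would introduce the exit time $\tau := \min\{s : \|\mathbf{y}^{k+s}\| > \Delta_{2}(\alpha,\zeta)\}$ and work with the stopped process, on which the contraction above is valid by construction. Unrolling the recursion for $s \le \tau$ gives $\mathbf{y}^{k+s} = (\prod_{j=0}^{s-1}(\mathbf{I}_{mn}-\alpha\mathbf{H}^{k+j}))\mathbf{y}^{k} - \alpha\sum_{j=0}^{s-1}\mathbf{A}_{j,s}\xi^{k+j}$, where $\|\mathbf{A}_{j,s}\| \le \beta^{s-1-j}$. The first term is at most $\beta^{s}\|\mathbf{y}^{k}\| \le \delta_{1}(\alpha)$, while the second is a martingale (in $s$) whose geometrically weighted increments have total variance $\alpha^{2} mn\sigma^{2}\sum_{j\ge 0} \beta^{2j} = \mathcal{O}(\alpha)$. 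Applying a martingale concentration inequality of Azuma--Hoeffding type, using the bounded noise $\|\xi_{i}^{k}\|\le r$ afforded by the sphere-sampling scheme in the preceding remark, together with a union bound over the $K(\alpha,\zeta) = \mathcal{O}(\alpha^{-2}\log\zeta^{-1})$ steps, inflates the $\mathcal{O}(\sqrt{\alpha})$ standard deviation by a factor $\sqrt{\log(K(\alpha,\zeta)/\zeta)} = \mathcal{O}(\sqrt{\log(\alpha^{-1}\zeta^{-1})})$, producing the bound $\Delta_{2}(\alpha,\zeta) = \mathcal{O}(\sqrt{\alpha\log(\alpha^{-1}\zeta^{-1})})$ with probability at least $1-\zeta/2$. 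The step-size ceiling involving $\log\zeta^{-1}$ guarantees $\Delta_{2}(\alpha,\zeta) < \delta$, so that the stopped and unstopped processes coincide and $\tau > K(\alpha,\zeta)$ holds on the good event.

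The main obstacle is this last step: turning the in-expectation contraction into a uniform-over-$K(\alpha,\zeta)$ high-probability guarantee without the union bound costing more than a single logarithmic factor. Care is needed to set up the stopped supermartingale so that the additive noise variance and the geometric damping combine to give exactly the $\sqrt{\alpha\log(\alpha^{-1}\zeta^{-1})}$ scaling, and to verify the self-consistency chain $\delta_{1}(\alpha) \le \Delta_{2}(\alpha,\zeta) < \delta$ that keeps the linearization valid throughout; this is where the analogous stay-close argument in \cite{ge2015escaping} is invoked.
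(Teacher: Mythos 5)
Your proposal follows essentially the same route as the paper's proof: both recast the \textbf{NDGD} step as noisy gradient descent on $Q_{\alpha}$, both use the step-size ceiling to get $\alpha L_{Q_{\alpha}}^{g} = \alpha L_{F}^{g} + 1 - \lambda_{\min}(\mathbf{W}) \le 1$, both arrive at the same one-step mean-square contraction $\mathbb{E}[\|\hat{\mathbf{x}}^{k+1}-\hat{\mathbf{x}}^{*}\|^{2} \mid \hat{\mathbf{x}}^{k}] \le (1-\alpha\mu)^{2}\|\hat{\mathbf{x}}^{k}-\hat{\mathbf{x}}^{*}\|^{2} + \alpha^{2}mn\sigma^{2}$, and both hand off the uniform-in-time, high-probability conclusion to the proof of Lemma 16 in \cite{ge2015escaping}. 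Two minor differences are worth noting. First, you obtain the contraction from the integral-form Hessian $\mathbf{H}^{k}$, whereas the paper uses Nesterov's coercivity inequality (Theorem 2.1.12 of \cite{nesterov2003introductory}) combined with the strong-convexity lower bound on the gradient gap; these are interchangeable and give the same recursion. Second, you sketch the concentration machinery that the paper cites wholesale, and here one step is stated loosely: the weighted sum $\sum_{j=0}^{s-1}\mathbf{A}_{j,s}\xi^{k+j}$ is not literally a martingale, because $\mathbf{A}_{j,s}$ contains Hessians evaluated at iterates after step $k+j$ and therefore depends on $\xi^{k+j}$ itself (and the weights also change with $s$); the standard fix, and what Ge et al.\ actually do, is to run the stopped supermartingale/concentration argument on the scalar process $\|\hat{\mathbf{x}}^{k+s}-\hat{\mathbf{x}}^{*}\|^{2}$ rather than on the unrolled sum, which your final paragraph implicitly defers to. Relatedly, the Azuma-type inequality needs bounded (or sub-Gaussian) noise, which is an assumption beyond the variance bound in the lemma statement; you make this reliance explicit via the sphere-sampling remark, while the paper inherits it silently through its citation of \cite{ge2015escaping}.
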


    \begin{proof}
        Given $0 < \alpha \le \lambda_{\min}(\mathbf{W}) / L_{F}^{g}$, note that $\alpha \le 1 / L_{Q_{\alpha}}^{g}$ because the eigenvalues of the symmetric, doubly stochastic, diagonally dominant, and thus positive definite matrix $\mathbf{W}$, is contained in the interval $(0,1]$. By $\lambda_{\min}(\nabla^{2} Q_{\alpha}(\hat{\mathbf{x}}^{k})) \ge \mu > 0$, we have the local strong convexity with modulus $\mu$. Since $Q_{\alpha}$ also has Lipschitz gradient, by Theorem 2.1.12 \cite{nesterov2003introductory},
        \begin{multline*}
            \nabla Q_{\alpha}(\hat{\mathbf{x}})^{T} (\hat{\mathbf{x}} - \hat{\mathbf{x}}^{*}) \ge \frac{L_{Q_{\alpha}}^{g} \mu}{L_{Q_{\alpha}}^{g} + \mu} \|\hat{\mathbf{x}} - \hat{\mathbf{x}}^{*}\|^{2}\\ 
             + \frac{1}{L_{Q_{\alpha}}^{g} + \mu}\|\nabla Q_{\alpha}(\hat{\mathbf{x}}) - \nabla Q_{\alpha}(\hat{\mathbf{x}}^{*})\|^{2}.
        \end{multline*}
        Therefore,
        \begin{align*}
            \mathbb{E}[\|&\hat{\mathbf{x}}^{k+1} - \hat{\mathbf{x}}^{*}\|^{2} ~|~ \hat{\mathbf{x}}^{k}]\\ 
            =&~\mathbb{E}[\|\hat{\mathbf{x}}^{k} - \alpha(\nabla Q_{\alpha}(\hat{\mathbf{x}}^{k}) + \xi^{k}) - \hat{\mathbf{x}}^{*}\|^{2} ~|~ \hat{\mathbf{x}}^{k}]\\
            \le&~\|(\hat{\mathbf{x}}^{k} - \hat{\mathbf{x}}^{*})\|^{2} + \alpha^{2} \|\nabla Q_{\alpha}(\hat{\mathbf{x}}^{k}) - \nabla Q_{\alpha}(\hat{\mathbf{x}}^{*})\|^{2}\\
            &~ \quad- 2\alpha\nabla Q_{\alpha}(\hat{\mathbf{x}}^{k})^{T} (\hat{\mathbf{x}}^{k} - \hat{\mathbf{x}}^{*}) + \alpha^{2} mn \sigma^{2}\\
            \le&~\|(\hat{\mathbf{x}}^{k} - \hat{\mathbf{x}}^{*})\|^{2} + \alpha^{2} \|\nabla Q_{\alpha}(\hat{\mathbf{x}}^{k}) - \nabla Q_{\alpha}(\hat{\mathbf{x}}^{*})\|^{2}\\
            &~ \quad- 2\alpha(\frac{1}{L_{Q_{\alpha}}^{g} + \mu}\|\nabla Q_{\alpha}(\hat{\mathbf{x}}^{k}) - \nabla Q_{\alpha}(\hat{\mathbf{x}}^{*})\|^{2}\\ 
            &~ \quad+ \frac{L_{Q_{\alpha}}^{g} \mu}{L_{Q_{\alpha}}^{g} + \mu}\|\hat{\mathbf{x}}^{k} - \hat{\mathbf{x}}^{*}\|^{2}) + \alpha^{2} mn \sigma^{2}\\
            \quad=&~(1 - 2\alpha\frac{L_{Q_{\alpha}}^{g} \mu}{L_{Q_{\alpha}}^{g} + \mu}) \|\hat{\mathbf{x}}^{k} - \hat{\mathbf{x}}^{*}\|^{2} + (\alpha^{2} - 2\alpha\frac{1}{L_{Q_{\alpha}}^{g} + \mu})\\
            &~\quad\|\nabla Q_{\alpha}(\hat{\mathbf{x}}^{k}) - \nabla Q_{\alpha}(\hat{\mathbf{x}}^{*})\|^{2} + \alpha^{2} mn \sigma^{2}.
        \end{align*}
        Since $\alpha \le 1 / L_{Q_{\alpha}}^{g}$, note that $\alpha - 2 / (L_{Q_{\alpha}}^{g} + \mu) < 0$ with $\mu \le L_{Q_{\alpha}}^{g}$. Then, 
        \begin{align*}
            \mathbb{E}[\|\hat{\mathbf{x}}^{k} - \hat{\mathbf{x}}^{*}\|^{2}] \le&~(1-\frac{2\alpha \cdot L_{Q_{\alpha}}^{g} \mu}{L_{Q_{\alpha}}^{g} + \mu} + \alpha^{2}\mu^{2} - \frac{2\alpha \cdot \mu^{2}}{L_{Q_{\alpha}}^{g} + \mu})\\
            &~\quad\|\hat{\mathbf{x}}^{k-1} - \hat{\mathbf{x}}^{*}\|^{2} + \alpha^{2} mn \sigma^{2}\\
            \le&~(1-\alpha\mu)^{2} \|\hat{\mathbf{x}}^{k-1} - \hat{\mathbf{x}}^{*}\|^{2} + \alpha^{2} mn \sigma^{2}.
        \end{align*}
        Thus, choosing $\alpha \le 1 / L_{Q_{\alpha}}^{g} \le 1 / L_{F}^{g} \le \mu^{-1}$, the result holds as shown in the proof of Lemma 16 in \cite{ge2015escaping}.
    \end{proof}

\subsection{Main proof}
\begin{proof}
    The main proof includes two steps: i) it is shown that three sets defined in \eqref{eq: regularity set of Q} cover all possible points with respect to $Q_\alpha$; ii) it is shown that the upper bound of the decrease in $Q_{\alpha}$ can be used to derive a lower bound for the probability that the $K(\alpha,\zeta)$-th update at each agent is close to a local minimizer of $f$.

    \textbf{Step 1.} By the supposition in Theorem \ref{the: Second order guarantee}, given $\Delta_{1} > 0$ and $0 < \zeta < 1$, there exist $\epsilon > 0$, $0 < \gamma \le L_{F}^g$, $0 < \mu \le L_{F}^g$, $\delta > 0$, and
     \begin{align*}
         0 < \alpha \le \min\{\bar{\alpha}(\Delta_{1}), \frac{\sqrt{2}-1}{L_{F}^{g}}, \frac{\lambda_{\min}(\mathbf{W})}{L_{F}^{g} \cdot \max\{1,\log(\zeta^{-1})\}}\},
     \end{align*}
     such that $\mathcal{L}_{\alpha,\epsilon}^{1} \cup \mathcal{L}_{\alpha,\gamma}^{2} \cup \mathcal{L}_{\alpha,\mu,\delta}^{3} = (\mathbb{R}^{n})^{m}$, with respect to \eqref{eq: regularity set of f}, and thus $\mathcal{L}_{\alpha,\mu,\delta}^{3} \supseteq (\mathcal{L}_{\alpha,\epsilon}^{1}\cup \mathcal{L}_{\alpha,\gamma}^{2})^c$, where the superscript $c$ denote set complement. If $\hat{\mathbf{x}} \in \mathcal{L}_{\alpha,\epsilon}^{1}$,
    \begin{align*}
        \|\nabla Q_{\alpha} (\hat{\mathbf{x}})\| &= \|\nabla F(\hat{\mathbf{x}}) + \frac{1}{\alpha} (\mathbf{I}_{mn}-\hat{\mathbf{W}})\hat{\mathbf{x}}\| \ge \epsilon;
    \end{align*}
    if $\hat{\mathbf{x}} \in \mathcal{L}_{\alpha,\gamma}^{2}$, then by Weyl's inequality,
    \begin{align*}
        \lambda_{\min}(\nabla^{2} Q_{\alpha} (\hat{\mathbf{x}})) = \lambda_{\min}(\nabla^{2} F(\hat{\mathbf{x}}) + \frac{1}{\alpha} (\mathbf{I}_{mn}-\hat{\mathbf{W}})) \le  -\gamma;
    \end{align*}
    if $\hat{\mathbf{x}} \in \mathcal{L}_{\alpha,\mu,\delta}^{3}$, then again by Weyl's inequality,
    \begin{align*}
        \lambda_{\min}(\nabla^{2} Q_{\alpha} (\hat{\mathbf{x}})) = \lambda_{\min}(\nabla^{2} F(\hat{\mathbf{x}}) + \frac{1}{\alpha} (\mathbf{I}_{mn}-\hat{\mathbf{W}})) \ge \mu,
    \end{align*}
    and $\textup{dist} (\hat{\mathbf{x}}, \hat{\mathcal{X}}^{*}_{Q_{\alpha}}) \le \delta$.
    Therefore,
    $\mathcal{L}_{\alpha,\epsilon}^{1} \subseteq \mathcal{I}_{\alpha,\epsilon}^{1}$, $\mathcal{L}_{\alpha,\gamma}^{2} \subseteq \mathcal{I}_{\alpha,\gamma}^{2}$, $\mathcal{L}_{\alpha,\mu,\delta}^{3} \subseteq \mathcal{I}_{\alpha,\mu,\delta}^{3}$,
    whereby 
    \begin{align*}
        \mathcal{I}_{\alpha,\epsilon}^{1} \cup \mathcal{I}_{\alpha,\gamma}^{2} \cup \mathcal{I}_{\alpha,\mu,\delta}^{3} = (\mathbb{R}^{n})^{m},~\mathcal{I}_{\alpha,\mu,\delta}^{3} \supseteq (\mathcal{I}_{\alpha,\epsilon}^{1}\cup \mathcal{I}_{\alpha,\gamma}^{2})^c.
    \end{align*}
    

    
    \textbf{Step 2.} Define stochastic process $\{\kappa_{i}\} \subset \mathbb{N}$ as
    \begin{align}
    \label{eq: stochastic process}
        \kappa_{i} := \begin{cases}0, & i=0\\ 
        \kappa_{i-1} + 1, & \hat{\mathbf{x}}^{\kappa_{i-1}} \in \mathcal{I}_{\alpha,\epsilon}^{1} \cup \mathcal{I}_{\alpha,\mu,\delta}^{3}\\
        \kappa_{i-1} + T(\hat{\mathbf{x}}^{\kappa_{i-1}}), & \hat{\mathbf{x}}^{\kappa_{i-1}} \in \mathcal{I}_{\alpha,\gamma}^{2}\end{cases},
    \end{align} 
    where $T(\hat{\mathbf{x}}) \le T_{\max}(\alpha)  = \Tilde{\mathcal{O}}(\alpha^{-1})$ for all $\hat{\mathbf{x}}$ as per Lemma \ref{lem: Decrease for strict saddle}. By Lemma \ref{lem: Decrease for large gradient} and Lemma \ref{lem: Decrease for strict saddle}, $Q_{\alpha}$ decreases by a certain amount after a certain number of iterations for $\hat{\mathbf{x}} \in \mathcal{I}_{\alpha,\epsilon}^{1}$, and $\hat{\mathbf{x}} \in \mathcal{I}_{\alpha,\gamma}^{2}$, respectively, as follows
    \begin{align}
    \label{eq: expectation over L1 and L2}
        \begin{aligned}
            \mathbb{E}[Q_{\alpha} (\hat{\mathbf{x}}^{\kappa_{i+1}}) - Q_{\alpha} (\hat{\mathbf{x}}^{\kappa_{i}}) ~|~ \hat{\mathbf{x}}^{\kappa_{i}} \in \mathcal{I}_{\alpha,\epsilon}^{1}] \le -l_{1}(\alpha),\\
            \mathbb{E}[Q_{\alpha} (\hat{\mathbf{x}}^{\kappa_{i+1}}) - Q_{\alpha} (\hat{\mathbf{x}}^{\kappa_{i}}) ~|~ \hat{\mathbf{x}}^{\kappa_{i}} \in \mathcal{I}_{\alpha,\gamma}^{2}] \le -l_{2}(\alpha),
        \end{aligned}
    \end{align}
    where $l_{1}(\alpha) = \Omega(\alpha)$ and $l_{2}(\alpha) = \Omega(\alpha)$ are defined in Lemma \ref{lem: Decrease for large gradient} and Lemma \ref{lem: Decrease for strict saddle}.

    Defining event $\mathcal{E}_{i} := \{ (\exists j\leq i)~\hat{\mathbf{x}}^{\kappa_j} \in \mathcal{L}_{\alpha,\mu,\delta}^{3}\}$, by law of total expectation,
    \begin{multline*}
        \mathbb{E}[Q_{\alpha} (\hat{\mathbf{x}}^{\kappa_{i+1}}) -Q_{\alpha} (\hat{\mathbf{x}}^{\kappa_{i}})]\\ 
        = \mathbb{E}[Q_{\alpha} (\hat{\mathbf{x}}^{\kappa_{i+1}}) -Q_{\alpha} (\hat{\mathbf{x}}^{\kappa_{i}}) ~|~ \mathcal{E}_{i}] \cdot \mathbb{P}[\mathcal{E}_{i}]\\
        + \mathbb{E}[Q_{\alpha} (\hat{\mathbf{x}}^{\kappa_{i+1}}) -Q_{\alpha} (\hat{\mathbf{x}}^{\kappa_{i}}) ~|~ \mathcal{E}_{i}^{c}] \cdot \mathbb{P}[\mathcal{E}_{i}^{c}].
    \end{multline*}
    Combining \eqref{eq: stochastic process} and \eqref{eq: expectation over L1 and L2} gives
    \begin{align*}
        \mathbb{E}[Q_{\alpha} (\hat{\mathbf{x}}^{\kappa_{i+1}}) - Q_{\alpha} (\hat{\mathbf{x}}^{\kappa_{i}}) ~|~ \mathcal{E}_{i}^{c}]
        \le -l(\alpha)\cdot \Delta\kappa_{i},   
    \end{align*}
    where $l(\alpha) = \min\{l_{1}(\alpha), l_{2}(\alpha)/T_{\max}(\alpha)\} = \Omega(\alpha^{2})$ and $\Delta\kappa_{i} = \kappa_{i+1} - \kappa_{i}$. Since $\mathbb{P}[\mathcal{E}_{i-1}] \le \mathbb{P}[\mathcal{E}_{i}]$, we obtain
    \begin{multline*}
        \mathbb{E}[Q_{\alpha} (\hat{\mathbf{x}}^{\kappa_{i+1}})] - \mathbb{E}[Q_{\alpha} (\hat{\mathbf{x}}^{\kappa_{i}})]\\
        \le \mathbb{E}[Q_{\alpha} (\hat{\mathbf{x}}^{\kappa_{i}}) ~|~ \mathcal{E}_{i}] \cdot (\mathbb{P}[\mathcal{E}_{i}] - \mathbb{P}[\mathcal{E}_{i-1}])-l(\alpha) \cdot \Delta\kappa_{i}.
    \end{multline*}
    Since the generated sequence $\{Q_{\alpha}(\hat{\mathbf{x}}^{k})\}$ is assumed bounded, there exists $b>0$ such that $\|Q_{\alpha}(\hat{\mathbf{x}}^{k})\| \le b$ for all $k = 0,1,\cdots$. As such,
    \begin{multline*}
        \mathbb{E}[Q_{\alpha} (\hat{\mathbf{x}}^{\kappa_{i+1}})] - \mathbb{E}[Q_{\alpha} (\hat{\mathbf{x}}^{\kappa_{i}})]\\
        \le b \cdot (\mathbb{P}[\mathcal{E}_{i}] - \mathbb{P}[\mathcal{E}_{i-1}])
        -l(\alpha) \cdot \Delta\kappa_{i} \cdot \mathbb{P}[\mathcal{E}_{i}^{c}].
    \end{multline*}
    Summing both sides of the inequality over $i$ gives
    \begin{multline*}
        \mathbb{E}[Q_{\alpha} (\hat{\mathbf{x}}^{\kappa_{i}})] - \mathbb{E}[Q_{\alpha} (\hat{\mathbf{x}}^{\kappa_{1}})]\\
        \le b \cdot (\mathbb{P}[\mathcal{E}_{i-1}] - \mathbb{P}[\mathcal{E}_{0}])
        -l(\alpha) \cdot (\kappa_{i} - \kappa_{1}) \cdot \mathbb{P}[\mathcal{E}_{i}^{c}].
    \end{multline*}
    Since $\|Q_{\alpha}(\hat{\mathbf{x}}^{k})\| \le b$ for all $k = 0,1,\cdots$, it follows that $-2b \le b - l(\alpha) \cdot (\kappa_{i} - \kappa_{1}) \cdot \mathbb{P}[\mathcal{E}_{i}^{c}]$, which leads to the following upper bound for the probability of event $\mathcal{E}_{i}^{c}$:
    \begin{align*}
        \mathbb{P}[\mathcal{E}_{i}^{c}] \le \frac{3b}{l(\alpha)(\kappa_{i} - \kappa_{1})}.
    \end{align*}
    Therefore, if $\kappa_{i} - \kappa_{1}$ grows larger than $6b / l(\alpha)$, then $\mathbb{P}[\mathcal{E}_{i}^{c}] \le 1 / 2$. Since $\kappa_{1} \le T_{\max}(\alpha) = \mathcal{O}(\alpha^{-1})$, after $K^{\prime}(\alpha) = 6b / l(\alpha) + T_{\max}(\alpha) = \mathcal{O}(\alpha^{-2})$ steps, $\{\hat{\mathbf{x}}^{k}\}$ must enter $\mathcal{L}_{\alpha,\mu,\delta}^{3}$ at least once with probability at least $1 / 2$. Therefore, by repeating this step $\log\zeta^{-1}$ times, the probability of $\{\hat{\mathbf{x}}^{k}\}$ entering $\mathcal{L}_{\alpha,\mu,\delta}^{3}$ at least once is lower bounded:
    \begin{align*}
        \mathbb{P}[\{(\exists k\leq K(\alpha,\zeta))~\hat{\mathbf{x}}^{k} \in \mathcal{L}_{\alpha,\mu,\delta}^{3}\}] \ge 1 - \frac{\zeta}{2},
    \end{align*}
    where $K(\alpha) = \mathcal{O}(\alpha^{-2}\log\zeta^{-1})$. Combining this with Lemma \ref{lem: Stay close to minimizers}, we have that, after $K(\alpha,\zeta)$ iterations, Algorithm \ref{alg: Noisy dgd} produces a point that is $\Delta_{2}(\alpha,\zeta)$-close to $\hat{\mathcal{X}}^{*}_{Q_{\alpha}}$ with probability at least $1-\zeta$, where $\Delta_{2}(\alpha,\zeta) = \mathcal{O}(\sqrt{\alpha \log(\alpha^{-1} \zeta^{-1})})$. For given $\Delta_{1} > 0$, since $\alpha \le \Bar{\alpha}(\Delta_{1})$ satisfies requirements of Theorem \ref{the: Local minimizers for Q},
    $\hat{\mathbf{x}}^{*}=\inf_{\hat{\mathbf{x}} \in \mathcal{X}_{Q_{\alpha}}^{*}}\|\hat{\mathbf{x}}_{i}^{K(\alpha,\zeta)} -\hat{\mathbf{x}}\|$ is such that $\hat{\mathbf{x}}_{i}^{*}$ is $\Delta_{1}$-close to $\mathcal{X}_{f}^{*}$. To summarize, we have for $i \in \mathcal{V}$,
    \begin{align*}
        \mathrm{dist}(\hat{\mathbf{x}}_{i}^{K(\alpha,\zeta)},\mathcal{X}_{f}^{*}) \leq \Delta_{1} + \Delta_{2}(\alpha,\zeta)
    \end{align*}
    as claimed.
\end{proof}

\section{Numerical Example}
\label{sec: Numerical Example}
Consider the following non-convex optimization problem over $\mathbf{x} = (x_1,x_2)$:
\begin{align*}
    \min_{\mathbf{x} \in \mathbb{R}^2} f({\mathbf{x}}) = \min_{\mathbf{x} \in \mathbb{R}^2} \sum_{i=1}^5 f_i(\mathbf{x}) = x_1^4-x_1^2+x_2^4+x_2^2,
\end{align*}
where $f_1(\mathbf{x}) = 0.25 x_1^4 - x_1^2 - x_2^2$, $f_2(\mathbf{x}) = 0.25 x_1^4 + 0.5 x_2^4 + 1.5 x_2^2$, $f_3(\mathbf{x}) = -x_1^2 + x_2^2$, $f_4(\mathbf{x}) = 0.5 x_1^4 - 0.5 x_2^2$, and $f_5(\mathbf{x}) = x_1^2 + 0.5 x_2^4$.
The mixing matrix is taken to be
\begin{align*}
    \mathbf{W} = 
    \begin{bmatrix} 
        0.6 & 0 & 0.2 & 0 & 0.2 \\
        0 & 0.6 & 0 & 0.2 & 0.2 \\
        0.2 & 0 & 0.6 & 0.2 & 0 \\
        0 & 0.2 & 0.2 & 0.6 & 0 \\
        0.2 & 0.2 & 0 & 0 & 0.6
    \end{bmatrix}.
\end{align*}
It can be verified that $\mathbf{x} = (x_1, x_2) = (0, 0)$ is a saddle point of $f$, and that $\mathbf{x} = (x_1, x_2) = (-\frac{\sqrt{2}}{2}, 0)$ and $\mathbf{x} = (x_1, x_2) = (\frac{\sqrt{2}}{2}, 0)$ are two local minimizers. We compare the performance of \textbf{DGD} and \textbf{NDGD} with constant step-size $\alpha = 0.005$, both initialized from $\mathbf{x}^0 = (x_1^0, x_2^0) = (10^{-6}, 10^{-6})$ (i.e., close to a saddle point). 



    \begin{figure*}
        \centering
        \begin{subfigure}[h]{0.42\textwidth}
            \centering
            \includegraphics[width=\textwidth]{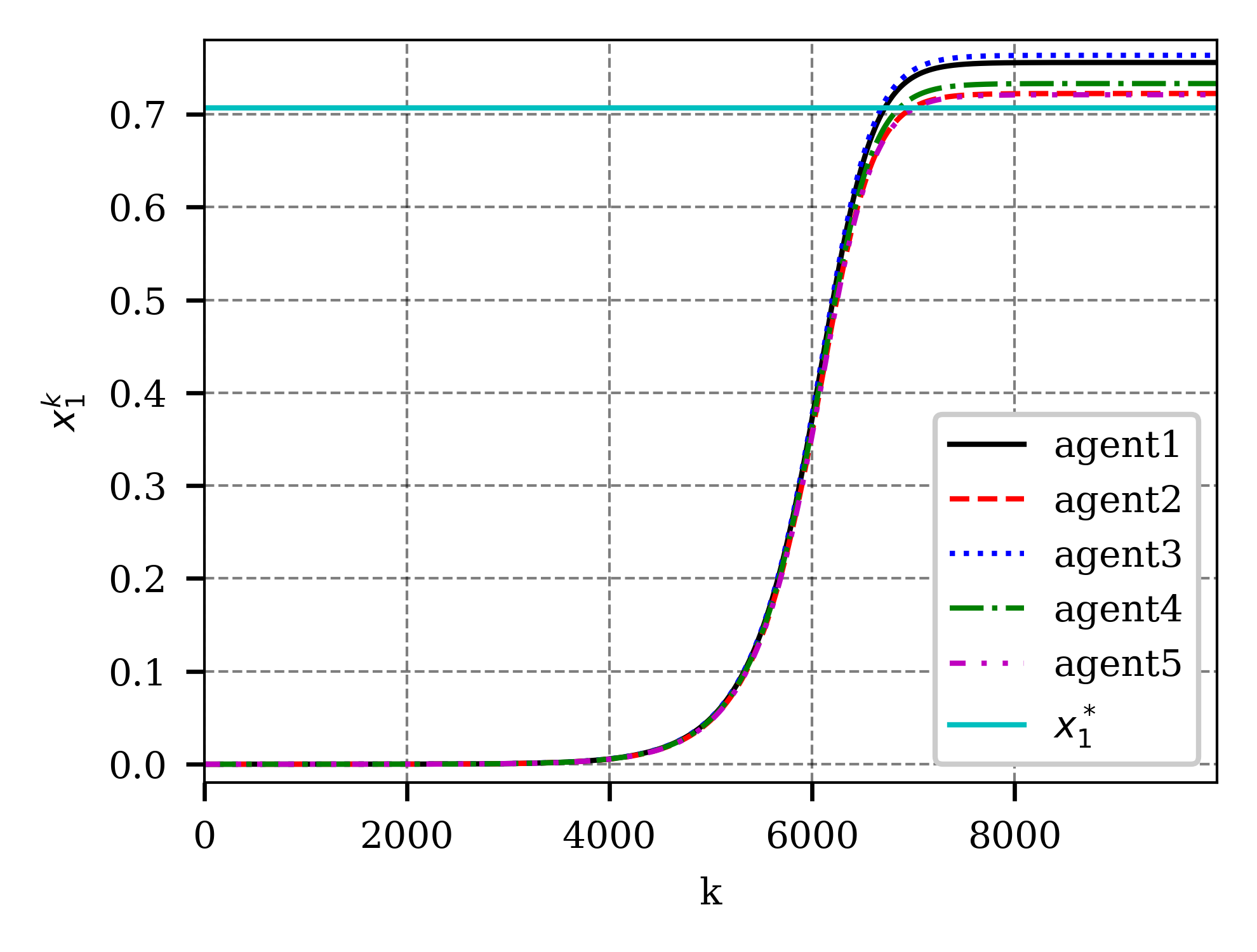}
            \caption{$\mathbf{x}_1^k$ of DGD.}
            \label{fig: DGD}
        \end{subfigure}\hspace{0.15\textwidth}
        \begin{subfigure}[h]{0.42\textwidth}
            \centering
            \includegraphics[width=\textwidth]{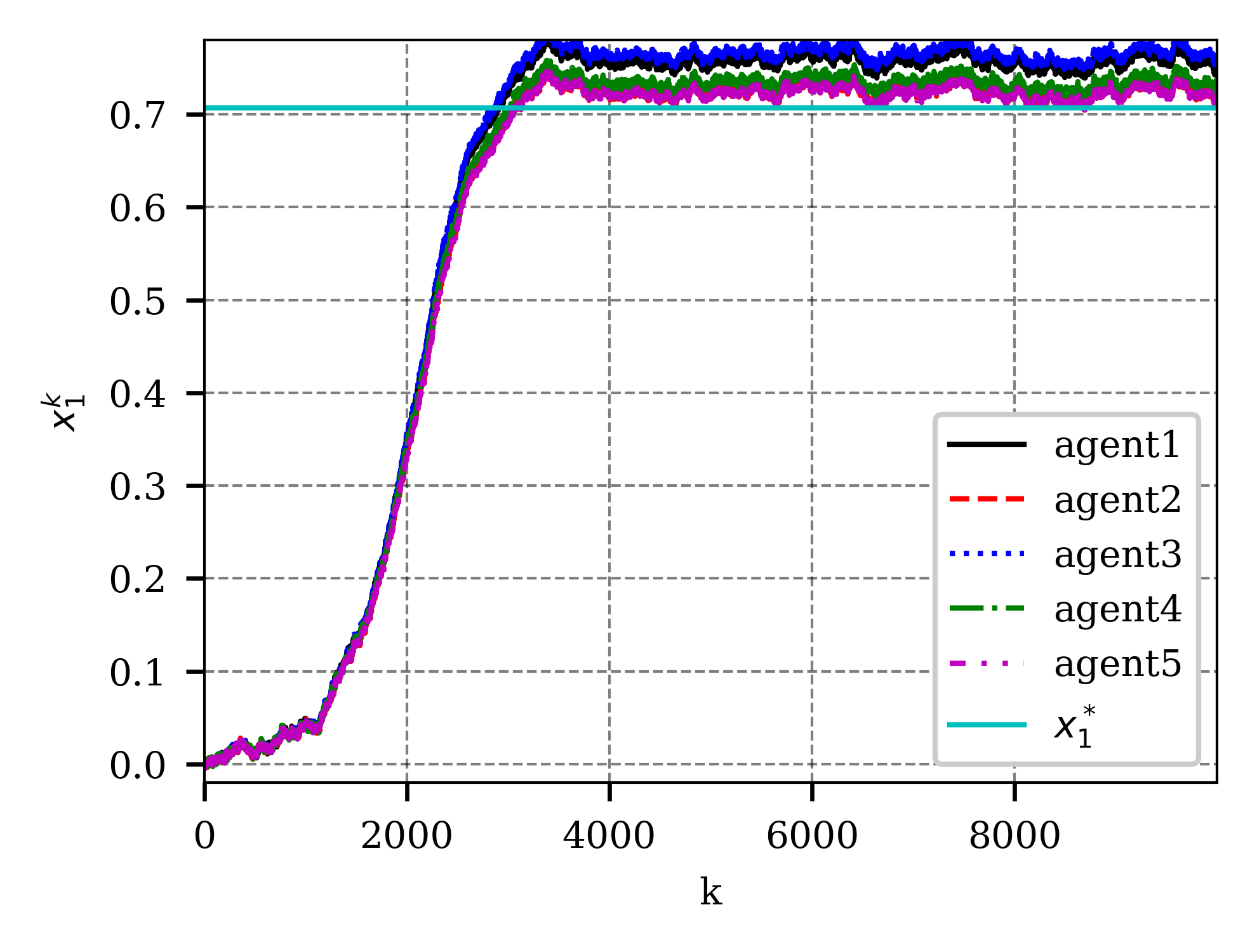}
            \caption{$\mathbf{x}_1^k$ of NDGD.}
            \label{fig: NDGD}
        \end{subfigure}
        \centering
        \caption{Iterations of $\mathbf{x}_1$.}
        \label{fig: x1}
    \end{figure*}
    
From Figure \ref{fig: x1}(\subref{fig: DGD}), although not trapped forever, it does take \textbf{DGD} about 6000 iterations to escape the vicinity of the saddle point and converge to the neighborhood of a local minimizer. From Figure \ref{fig: x1}(\subref{fig: NDGD}), we can see that \textbf{DGD} escapes the vicinity of the saddle point with about 2000 iterations and converges to the neighborhood of a local minimizer. The effectiveness of \textbf{NDGD} over \textbf{DGD} is evident through this example.

\section{Conclusion}
\label{sec: Conclusion}
A fixed step-size noisy distributed gradient descent (\textbf{NDGD}) algorithm is formulated for solving optimization problems in which the objective is a finite sum of smooth but possibly non-convex functions. Random perturbations are added to the gradient descent at each step to actively evade saddle points. Under certain regularity conditions, and with a suitable step-size, each agent converges (in probability with specified confidence) to a neighborhood of a local minimizer. In particular, we determine a probabilistic upper bound on the distance between the iterate at each agent, and the set of local minimizers, after a sufficient number of iterations.

The potential applications of the \textbf{NDGD} algorithm are vast and varied, including multi-agent systems control, federated learning and sensor networks location estimation, particularly in large-scale network scenarios. Further exploration of different approaches to introducing random perturbations, and analysis of convergence rate performance can be pursued in future work.

\bibliographystyle{IEEEtran}
\bibliography{IEEEabrv,Reference}

\begin{thebibliography}{10}
\providecommand{\url}[1]{#1}
\csname url@samestyle\endcsname
\providecommand{\newblock}{\relax}
\providecommand{\bibinfo}[2]{#2}
\providecommand{\BIBentrySTDinterwordspacing}{\spaceskip=0pt\relax}
\providecommand{\BIBentryALTinterwordstretchfactor}{4}
\providecommand{\BIBentryALTinterwordspacing}{\spaceskip=\fontdimen2\font plus
\BIBentryALTinterwordstretchfactor\fontdimen3\font minus
  \fontdimen4\font\relax}
\providecommand{\BIBforeignlanguage}[2]{{%
\expandafter\ifx\csname l@#1\endcsname\relax
\typeout{** WARNING: IEEEtran.bst: No hyphenation pattern has been}%
\typeout{** loaded for the language `#1'. Using the pattern for}%
\typeout{** the default language instead.}%
\else
\language=\csname l@#1\endcsname
\fi
#2}}
\providecommand{\BIBdecl}{\relax}
\BIBdecl

\bibitem{dong2018theory}
X.~Dong, Y.~Hua, Y.~Zhou, Z.~Ren, and Y.~Zhong, ``Theory and experiment on
  formation-containment control of multiple multirotor unmanned aerial vehicle
  systems,'' \emph{IEEE Transactions on Automation Science and Engineering},
  vol.~16, no.~1, pp. 229--240, 2018.

\bibitem{qiu2009literature}
Z.~Qiu, G.~Deconinck, and R.~Belmans, ``A literature survey of optimal power
  flow problems in the electricity market context,'' in \emph{2009 IEEE/PES
  Power Systems Conference and Exposition}.\hskip 1em plus 0.5em minus
  0.4em\relax IEEE, 2009, pp. 1--6.

\bibitem{gao2019reinforcement}
W.~Gao, J.~Gao, K.~Ozbay, and Z.-P. Jiang, ``Reinforcement-learning-based
  cooperative adaptive cruise control of buses in the lincoln tunnel corridor
  with time-varying topology,'' \emph{IEEE Transactions on Intelligent
  Transportation Systems}, vol.~20, no.~10, pp. 3796--3805, 2019.

\bibitem{nedic2018distributed}
A.~Nedi{\'c} and J.~Liu, ``Distributed optimization for control,'' \emph{Annual
  Review of Control, Robotics, and Autonomous Systems}, vol.~1, pp. 77--103,
  2018.

\bibitem{boyd2011distributed}
S.~Boyd, N.~Parikh, E.~Chu, B.~Peleato, J.~Eckstein \emph{et~al.},
  ``Distributed optimization and statistical learning via the alternating
  direction method of multipliers,'' \emph{Foundations and
  Trends{\textregistered} in Machine learning}, vol.~3, no.~1, pp. 1--122,
  2011.

\bibitem{terelius2011decentralized}
H.~Terelius, U.~Topcu, and R.~M. Murray, ``Decentralized multi-agent
  optimization via dual decomposition,'' \emph{IFAC proceedings volumes},
  vol.~44, no.~1, pp. 11\,245--11\,251, 2011.

\bibitem{shi2014linear}
W.~Shi, Q.~Ling, K.~Yuan, G.~Wu, and W.~Yin, ``On the linear convergence of the
  admm in decentralized consensus optimization,'' \emph{IEEE Transactions on
  Signal Processing}, vol.~62, no.~7, pp. 1750--1761, 2014.

\bibitem{tsitsiklis1986distributed}
J.~Tsitsiklis, D.~Bertsekas, and M.~Athans, ``Distributed asynchronous
  deterministic and stochastic gradient optimization algorithms,'' \emph{IEEE
  transactions on automatic control}, vol.~31, no.~9, pp. 803--812, 1986.

\bibitem{nedic2009distributed}
A.~Nedic and A.~Ozdaglar, ``Distributed subgradient methods for multi-agent
  optimization,'' \emph{IEEE Transactions on Automatic Control}, vol.~54,
  no.~1, pp. 48--61, 2009.

\bibitem{nedic2010constrained}
A.~Nedic, A.~Ozdaglar, and P.~A. Parrilo, ``Constrained consensus and
  optimization in multi-agent networks,'' \emph{IEEE Transactions on Automatic
  Control}, vol.~55, no.~4, pp. 922--938, 2010.

\bibitem{yuan2016convergence}
K.~Yuan, Q.~Ling, and W.~Yin, ``On the convergence of decentralized gradient
  descent,'' \emph{SIAM Journal on Optimization}, vol.~26, no.~3, pp.
  1835--1854, 2016.

\bibitem{tsianos2012distributed}
K.~I. Tsianos and M.~G. Rabbat, ``Distributed strongly convex optimization,''
  in \emph{2012 50th Annual Allerton Conference on Communication, Control, and
  Computing (Allerton)}.\hskip 1em plus 0.5em minus 0.4em\relax IEEE, 2012, pp.
  593--600.

\bibitem{jakovetic2014fast}
D.~Jakoveti{\'c}, J.~Xavier, and J.~M. Moura, ``Fast distributed gradient
  methods,'' \emph{IEEE Transactions on Automatic Control}, vol.~59, no.~5, pp.
  1131--1146, 2014.

\bibitem{chen2012fast}
A.~I. Chen and A.~Ozdaglar, ``A fast distributed proximal-gradient method,'' in
  \emph{2012 50th Annual Allerton Conference on Communication, Control, and
  Computing (Allerton)}.\hskip 1em plus 0.5em minus 0.4em\relax IEEE, 2012, pp.
  601--608.

\bibitem{shi2015extra}
W.~Shi, Q.~Ling, G.~Wu, and W.~Yin, ``Extra: An exact first-order algorithm for
  decentralized consensus optimization,'' \emph{SIAM Journal on Optimization},
  vol.~25, no.~2, pp. 944--966, 2015.

\bibitem{nedic2017achieving}
A.~Nedic, A.~Olshevsky, and W.~Shi, ``Achieving geometric convergence for
  distributed optimization over time-varying graphs,'' \emph{SIAM Journal on
  Optimization}, vol.~27, no.~4, pp. 2597--2633, 2017.

\bibitem{nedic2017geometrically}
A.~Nedi{\'c}, A.~Olshevsky, W.~Shi, and C.~A. Uribe, ``Geometrically convergent
  distributed optimization with uncoordinated step-sizes,'' in \emph{2017
  American Control Conference (ACC)}.\hskip 1em plus 0.5em minus 0.4em\relax
  IEEE, 2017, pp. 3950--3955.

\bibitem{zeng2018nonconvex}
J.~Zeng and W.~Yin, ``On nonconvex decentralized gradient descent,'' \emph{IEEE
  Transactions on signal processing}, vol.~66, no.~11, pp. 2834--2848, 2018.

\bibitem{daneshmand2020second}
A.~Daneshmand, G.~Scutari, and V.~Kungurtsev, ``Second-order guarantees of
  distributed gradient algorithms,'' \emph{SIAM Journal on Optimization},
  vol.~30, no.~4, pp. 3029--3068, 2020.

\bibitem{du2017gradient}
S.~S. Du, C.~Jin, J.~D. Lee, M.~I. Jordan, A.~Singh, and B.~Poczos, ``Gradient
  descent can take exponential time to escape saddle points,'' \emph{Advances
  in neural information processing systems}, vol.~30, 2017.

\bibitem{ge2015escaping}
R.~Ge, F.~Huang, C.~Jin, and Y.~Yuan, ``Escaping from saddle points—online
  stochastic gradient for tensor decomposition,'' in \emph{Conference on
  learning theory}.\hskip 1em plus 0.5em minus 0.4em\relax PMLR, 2015, pp.
  797--842.

\bibitem{jin2017escape}
C.~Jin, R.~Ge, P.~Netrapalli, S.~M. Kakade, and M.~I. Jordan, ``How to escape
  saddle points efficiently,'' in \emph{International Conference on Machine
  Learning}.\hskip 1em plus 0.5em minus 0.4em\relax PMLR, 2017, pp. 1724--1732.

\bibitem{swenson2022distributed}
B.~Swenson, R.~Murray, H.~V. Poor, and S.~Kar, ``Distributed stochastic
  gradient descent: Nonconvexity, nonsmoothness, and convergence to local
  minima,'' \emph{The Journal of Machine Learning Research}, vol.~23, no.~1,
  pp. 14\,751--14\,812, 2022.

\bibitem{vlaski2021distributed1}
S.~Vlaski and A.~H. Sayed, ``Distributed learning in non-convex
  environments—part i: Agreement at a linear rate,'' \emph{IEEE Transactions
  on Signal Processing}, vol.~69, pp. 1242--1256, 2021.

\bibitem{vlaski2021distributed2}
------, ``Distributed learning in non-convex environments—part ii: Polynomial
  escape from saddle-points,'' \emph{IEEE Transactions on Signal Processing},
  vol.~69, pp. 1257--1270, 2021.

\bibitem{nesterov2003introductory}
Y.~Nesterov, \emph{Introductory lectures on convex optimization: A basic
  course}.\hskip 1em plus 0.5em minus 0.4em\relax Springer Science \& Business
  Media, 2003, vol.~87.

\end{thebibliography}

\end{document}